\begin{document}

\author{Lars Simon}
\address{Lars Simon, Department of Mathematical Sciences, Norwegian University of Science and Technology, Trondheim, Norway}
\email{lars.simon@ntnu.no}

\author{Berit Stens\o nes}
\address{Berit Stens\o nes, Department of Mathematical Sciences, Norwegian University of Science and Technology, Trondheim, Norway}
\email{berit.stensones@ntnu.no}

\thanks{The second author is supported by the Research Council of Norway, Grant number 240569/F20.}
\thanks{Part of this work was done during the international research program "Several Complex Variables and Complex Dynamics" at the Centre for Advanced Study at the Academy of Science and Letters in Oslo during the academic year 2016/2017.}
\title{An Example on $s$-H-Convexity in $\mathbb{C}^2$}

%
%

\subjclass[2010]{Primary 32F17, 32T99.  Secondary 32T20.}
\keywords{$s$-H-convexity, worm domain, Stein neighborhood basis.}

\begin{abstract}
We construct a bounded domain $\Omega$ in $\mathbb{C}^2$ with boundary of class $\mathcal{C}^{1,1}$, such that $\overline{\Omega}$ has a Stein neighborhood basis, but is {\emph{not}} $s$-H-convex for any real number $s\geq{1}$.
\end{abstract}

\maketitle

\section{introduction}

The notion of $s$-H-convexity was introduced by J.\ Chaumat and A.-M.\ Chollet in \cite{MR941631} and goes back to work by A.\ Dufresnoy \cite{MR526786}. Given a real number $s\geq{1}$, a compact set $\emptyset\neq{}K\subseteq\mathbb{C}^n$ is called $s$-H-convex, if there exists a $C>0$ with $C\leq{1}$, such that for all $\epsilon$, $0<\epsilon\leq{1}$, there exists an open pseudoconvex subset $\Omega_{\epsilon}$ of $\mathbb{C}^n$ satisfying
\begin{align*}
\{z\in\mathbb{C}^n\colon{}d(z,K)<C\epsilon^s\}\subseteq\Omega_{\epsilon}\subseteq{}\{z\in\mathbb{C}^n\colon{}d(z,K)<\epsilon\}\text{,}
\end{align*}
where $d(\cdot{},K)$ denotes the Euclidean distance to $K$.

J.\ Chaumat and A.-M.\ Chollet obtain various $\overline{\partial}$-results for such sets, see e.g.\ \cite{MR941631}, \cite{MR1207862} and \cite{MR1190977}. Another result in that spirit is due to A.-M.\ Chollet \cite{MR1228871}.

Furthermore, the notion of $s$-H-convexity is related to the Mergelyan property. Specifically, there exists a ${k_0}(s,n)>0$, such that $\mathcal{O}(\overline{\Omega})$ is dense in $\mathcal{C}^{k}(\overline{\Omega})\cap\mathcal{O}({\Omega})$, whenever $k$ is an integer $\geq{k_0}(s,n)$ and $\Omega\subseteq\mathbb{C}^n$ is a bounded pseudoconvex domain, satisfying suitable assumptions, whose closure is $s$-H-convex.

Given these $\overline{\partial}$-results and the connection to the Mergelyan property, it becomes desirable to identify sets which are $s$-H-convex for some $s\geq{1}$. Specifically, given a bounded (pseudoconvex) domain in $\mathbb{C}^n$ whose closure admits a Stein neighborhood basis, one can ask under which additional assumptions said closure is necessarily $s$-H-convex for some $s\geq{1}$.

To our knowledge, it is unknown whether there exists a bounded (pseudoconvex) domain $\Omega$ in $\mathbb{C}^2$ with boundary of class $\mathcal{C}^{2}$ (or $\mathcal{C}^{\infty}$), such that $\overline{\Omega}$ has a Stein neighborhood basis, but is {\emph{not}} $1$-H-convex. In this paper we show that, if the smoothness assumption on the boundary is relaxed appropriately, there exists a bounded domain whose closure admits a Stein neighborhood basis, but is not $s$-H-convex for {\emph{any}} $s\geq{1}$. This is achieved by modifying the construction of the classical Diederich-Forn{\ae}ss worm domain \cite{MR0430315}. A precise statement of the main result of this paper goes as follows:

\theoremstyle{plain}
\newtheorem{oberwurmmaintheorem}[propo]{Theorem}
\begin{oberwurmmaintheorem}
\label{oberwurmmaintheorem}
There exists a bounded (pseudoconvex) domain $\Omega\neq\emptyset$ in $\mathbb{C}^2$ with boundary of class $\mathcal{C}^{1,1}$, such that:
\begin{itemize}
\item{$\overline{\Omega}$ has a Stein neighborhood basis,}
\item{$\overline{\Omega}$ is {\emph{not}} $s$-H-convex for any real number $s\geq{1}$.}
\end{itemize}
\end{oberwurmmaintheorem}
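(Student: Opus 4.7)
The plan is to construct $\Omega$ by modifying the classical Diederich-Forn{\ae}ss worm $W_\mu$ in a two-fold manner. Recall that the standard worm has the annulus $A = \{(0,z_2)\colon 1\leq |z_2|^2\leq e^{2\mu}\}$ as a totally real set in its boundary, along which the Levi null direction twists by a total angle of $2\mu$; for $\mu$ large, this twisting obstructs the existence of a Stein neighborhood basis via a Kontinuit{\"a}tssatz applied to analytic disks attached near $\partial W_\mu$. My strategy is to first work from a moderate twist small enough that $\overline{W_\mu}$ still admits a Stein neighborhood basis, and then to install a countable sequence of localized "sub-worms" on smaller and smaller scales along the annulus, accumulating at a single point, each of which carries its own independent Kontinuit{\"a}tssatz obstruction that quantitatively defeats $s$-H-convexity at that scale.

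The key steps in order are: \textbf{(i)} Define a base worm-type defining function $\rho_0$ with a small twist $\mu_0$, chosen so that $\overline{W_{\mu_0}}$ has a Stein neighborhood basis, while still producing a totally real annulus $A_0\subseteq\partial W_{\mu_0}$. \textbf{(ii)} Fix a sequence of pairwise disjoint boundary patches $U_k\subseteq A_0$ with $\mathrm{diam}(U_k)=r_k$ shrinking to a single accumulation point $p_\infty\in A_0$, and locally modify $\rho_0$ on each $U_k$ by subtracting a smooth cutoff times an auxiliary sub-worm defining function carrying twist $\mu_k\to\infty$, rescaled to live at spatial scale $r_k$. Choose $r_k\searrow 0$ fast enough that the superposition of the perturbations yields a global defining function of class $\mathcal{C}^{1,1}$ (but not $\mathcal{C}^{2}$, which is compatible with our smoothness claim). \textbf{(iii)} For each $k$, construct an explicit Kontinuit{\"a}tssatz family of analytic disks $\varphi_{k,t}\colon\overline{\mathbb{D}}\to\mathbb{C}^2$ whose boundaries lie in an arbitrarily small tube around the $k$-th notch but whose centers escape outward by a distance $\delta_k$ that, owing to the large local twist $\mu_k$, satisfies $\delta_k/r_k^{s}\to\infty$ for every fixed $s\geq 1$. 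Any pseudoconvex open set containing the $C r_k^{s}$-tube of $\overline{\Omega}$ must then also contain those centers, violating the outer inclusion of the $s$-H-convexity sandwich at $\epsilon=r_k$. \textbf{(iv)} Conversely, construct the Stein neighborhood basis by exhibiting, for each $\epsilon>0$, a global plurisubharmonic function on a neighborhood of $\overline{\Omega}$ whose sublevel set $\{\rho<\epsilon\}$ is pseudoconvex and sits inside the $\epsilon$-tube: the notches with $r_k<\epsilon$ are absorbed inside the tube, and the finitely many notches with $r_k\geq\epsilon$ are handled by locally enlarging the neighborhood in a controlled fashion, using that the base domain already admits a Stein neighborhood basis.

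The main obstacle is the simultaneous calibration of the twists $\mu_k$ and the scales $r_k$: the Kontinuit{\"a}tssatz obstruction at the $k$-th notch must be strong enough that $\delta_k/r_k^{s}\to\infty$ for \emph{every} $s$, yet the obstruction must remain small in absolute terms so that, along with the base worm, it does not destroy the Stein neighborhood basis. A secondary difficulty is verifying that the infinite sum of local $\mathcal{C}^{1,1}$-perturbations gives a bona fide $\mathcal{C}^{1,1}$ defining function after accumulation at $p_\infty$, which constrains how rapidly $r_k$ may decay relative to the perturbation amplitudes; this is essentially a bookkeeping estimate but must be tuned against the obstruction estimate from step (iii). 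The hard substantive content is step (iii): producing the analytic-disk families whose boundary-to-center escape ratio blows up super-polynomially in $r_k$ as $\mu_k\to\infty$.
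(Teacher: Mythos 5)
Your mechanism for defeating $s$-H-convexity --- localized sub-worms whose twists $\mu_k\to\infty$ --- does not survive the calibration you defer to the end as ``bookkeeping.'' If a sub-worm at scale $r_k$ carries total twist exceeding $\pi$, then (after rescaling) the Diederich--Forn{\ae}ss Kontinuit{\"a}tssatz argument yields an \emph{absolute} obstruction: every pseudoconvex open set containing $\overline{\Omega}$, no matter how thin, must contain a point at distance $\geq c\,r_k>0$ from $\overline{\Omega}$, with $c>0$ depending only on how far the twist exceeds $\pi$ (this is the nontrivial Nebenh{\"u}lle of the worm). A single such notch, say $k=1$, then already shows that $\overline{\Omega}$ has no basis of pseudoconvex neighborhoods whatsoever, so the first bullet of the theorem fails. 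If instead you keep every $\mu_k\leq\pi$, the forced escape distance $\delta_k(\tau)$ produced by a tube of width $\tau$ does tend to $0$ as $\tau\to 0$, but with a $\mathcal{C}^2$ (or generic smooth) round-off of the rotating disc radii it does so at a polynomial rate in $\tau$; that defeats $s$-H-convexity only for $s$ below a finite threshold, never for all $s\geq 1$ simultaneously. So the blow-up $\delta_k/r_k^{s}\to\infty$ for every $s$ cannot be obtained by sending $\mu_k\to\infty$: the two requirements you propose to balance are incompatible along this route, and the tension is the entire difficulty, not a tuning step.

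The paper resolves this with a different mechanism, using a \emph{single} worm at the critical twist exactly $\pi$: the quantitative strength of the obstruction is governed not by the magnitude of the twist but by the order of flatness of the round-off of the radius function at the end of the critical arc. Choosing $\sqrt{\mathcal{S}(x)}=\cos(x-\pi)-g(x-\pi)$ with $g(x)=e^{-1/x}$ flat to infinite order makes the forced escape $x_\epsilon-\pi=1/(-\ln(\epsilon/2))$ decay only logarithmically in $\epsilon$, hence more slowly than $\epsilon^{1/s}$ for every $s$, which kills $s$-H-convexity for all $s\geq 1$ at once; and because $g>0$ on $(0,\infty)$ the closure still admits a Stein neighborhood basis, constructed by intersecting a slightly fattened worm with a domain of half-planes rotating in the $w$-plane. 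To salvage your multi-scale scheme you would have to replace ``$\mu_k\to\infty$'' by ``twist $=\pi$ with round-off flat to order tending to infinity,'' at which point the countable family is superfluous: one infinitely flat notch suffices.
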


This paper is organized as follows: in Section \ref{wormpaperprelimsection} we introduce some notation, define the domain $\Omega$ from Theorem \ref{oberwurmmaintheorem} and give an informal description of our constructions. In Section \ref{wormpapershconvsection} we show that $\overline{\Omega}$ is not $s$-H-convex for any $s\geq{1}$ and in Section \ref{wormpapersteinnhbdsection} we construct a Stein neighborhood basis for $\overline{\Omega}$. Finally, in Section \ref{wormpapertechnlemmasection}, we prove the remaining lemmas from Section \ref{wormpapersteinnhbdsection}.

\section{Preliminaries}\label{wormpaperprelimsection}

From now on we let the function $g\colon\mathbb{R}\to\mathbb{R}$ be given by
\begin{align*}
\begin{split}
x\mapsto
  \begin{cases}
        {0} & \text{if } x\leq{0}\text{,}\\
        {\exp{(-1/x)}} & \text{if } x>0\text{,}
  \end{cases}
\end{split}
\end{align*}
and fix a function $\mathcal{S}\colon\mathbb{R}\to\mathbb{R}$ as well as real numbers $0<{\alpha}<{\beta}<{\pi}/2$ with the following properties:
\begin{enumerate}
\item{$\mathcal{S}$ is of class $\mathcal{C}^{\infty}$ on $\mathbb{R}\setminus\{0,{\pi}\}$ and of class $\mathcal{C}^{1,1}$ on neighborhoods of $0$ and $\pi$ respectively,}
\item\label{wormpapersissymmy}{$\mathcal{S}$ is concave on $\mathbb{R}$ and satisfies $\mathcal{S}(x+{\pi}/2)=\mathcal{S}(-x+{\pi}/2)$ for all $x\in\mathbb{R}$,}
\item\label{wormpaperprelimprop3}{$\mathcal{S}$ is $\leq{1}$ on $\mathbb{R}$ and $\equiv{1}$ on $[0,{\pi}]$,}
\item\label{wormpaperprelimprop4}{$0<\mathcal{S}<1$ on $({\pi},{\pi}+{\beta})$ and $\mathcal{S}<0$ on $({\pi}+{\beta},{\infty})$,}
\item\label{wormpaperprelimprop5}{$\mathcal{S}$ is decreasing on $[{\pi},{\pi}+{\beta}]$ and $\mathcal{S}'({\pi}+{\beta})<0$,}
\item\label{wormpaperprelimprop6}{${\alpha}<1/(4{\pi})$ and the following inequalities hold for $x\in{}[-{\alpha},{\alpha}]$:
\begin{itemize}
\item{$|\sin{(x)}-x|\leq{|x|}^3$,}
\item{$|\sin{(x)}|\geq{(3/4)\cdot{|x|}}$,}
\item{$|\tan{(x)}|\leq{2|x|}$,}
\end{itemize}
}
\item\label{wormpaperprelimprop7}{for all $x\in{[{\pi},{\pi}+{\alpha}]}$ we have:
\begin{align*}
\sqrt{\mathcal{S}(x)}={{\cos{(x-{\pi})}-g(x-{\pi})}}\text{.}
\end{align*}
}
\end{enumerate}
The existence of $\mathcal{S}$, $\alpha$ and $\beta$ with these properties is clear. Using this, we define a function
\begin{align*}
{\rho}\colon{\left({\mathbb{C}\setminus\{0\}}\right)\times\mathbb{C}} & \to\mathbb{R}\text{,}\\
(z,w) & \mapsto\left|{w-\exp{(i\cdot\ln{(|z|^2)})}}\right|^2-\mathcal{S}(\ln{(|z|^2)})\text{,}
\end{align*}
and a set
\begin{align*}
{\Omega}=\left\{(z,w)\in\mathbb{C}^2\colon{z\neq{0}\text{ and }{\rho}(z,w)<0}\right\}\neq\emptyset\text{.}
\end{align*}
The $\Omega$ we just defined is the set appearing in Theorem \ref{oberwurmmaintheorem}, so we have to show that $\Omega$ has the desired properties. We start by collecting some basic properties of $\Omega$ in a lemma, whose elementary proof will be omitted:

\theoremstyle{plain}
\newtheorem{wormpaperbasicprop}[propo]{Lemma}
\begin{wormpaperbasicprop}
\label{wormpaperbasicprop}
The set $\Omega$ is a bounded, connected open subset of $\mathbb{C}^2$ with boundary of class $\mathcal{C}^{1,1}$. Furthermore, the boundary of $\Omega$ (as a subset of $\mathbb{C}^2$) is precisely the set of all points $(z,w)\in\mathbb{C}^2$ satisfying $z\neq{0}$ and $\rho{(z,w)}=0$.
\end{wormpaperbasicprop}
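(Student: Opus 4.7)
My plan for Lemma \ref{wormpaperbasicprop} is to extract boundedness and the shape of $\overline{\Omega}$ from the sign conditions on $\mathcal{S}$, and then get openness, boundary regularity and the boundary description from a single computation of $\nabla\rho$.

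First, for boundedness and for the fact that $\overline{\Omega}$ avoids $\{z=0\}$, I would observe that $\rho(z,w)<0$ forces $\mathcal{S}(\ln(|z|^2))>0$. By properties (\ref{wormpapersissymmy})--(\ref{wormpaperprelimprop5}) and the symmetry $\mathcal{S}(\pi/2+x)=\mathcal{S}(\pi/2-x)$, the set $\{\mathcal{S}>0\}$ equals the open interval $(-\beta,\pi+\beta)$, so $|z|^2\in(e^{-\beta},e^{\pi+\beta})$ on $\Omega$. The triangle inequality applied to $|w-\exp(i\ln(|z|^2))|^2<\mathcal{S}(\ln(|z|^2))\le 1$ then bounds $|w|$ by $2$, giving boundedness, and the lower bound on $|z|$ shows $\overline{\Omega}$ is a compact subset of $(\mathbb{C}\setminus\{0\})\times\mathbb{C}$. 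On this open set $\rho$ is everywhere well-defined and of class $\mathcal{C}^{1,1}$ (and $\mathcal{C}^{\infty}$ away from the two circles $\{\ln(|z|^2)\in\{0,\pi\}\}$), so openness of $\Omega=\rho^{-1}((-\infty,0))\cap\{z\neq 0\}$ is immediate from continuity.

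For connectedness I would write points of $\Omega$ as $(z,e^{i\ln(|z|^2)}+v)$ with $z$ in the open annulus $A=\{e^{-\beta/2}<|z|<e^{(\pi+\beta)/2}\}$ and $v$ in the open disc of radius $\sqrt{\mathcal{S}(\ln(|z|^2))}$; this exhibits $\Omega$ as the total space of a disc bundle (with continuously varying radius and center) over the connected annulus $A$, hence connected. For the boundary description and regularity I would compute
\begin{align*}
\frac{\partial\rho}{\partial\overline{w}}=w-\exp(i\ln(|z|^2)),\qquad \frac{\partial\rho}{\partial z}=\frac{1}{z}\bigl[i\bigl(we^{-i\ln(|z|^2)}-\overline{w}e^{i\ln(|z|^2)}\bigr)-\mathcal{S}'(\ln(|z|^2))\bigr].
\end{align*}
The only points in $\{z\neq 0,\rho=0\}$ where $\partial\rho/\partial\overline{w}$ vanishes are those with $w=\exp(i\ln(|z|^2))$ and $\mathcal{S}(\ln(|z|^2))=0$, i.e.\ $\ln(|z|^2)\in\{-\beta,\pi+\beta\}$; at such points the bracket in $\partial\rho/\partial z$ reduces to $-\mathcal{S}'(\ln(|z|^2))$, which is nonzero by property (\ref{wormpaperprelimprop5}) at $\pi+\beta$ and by the symmetry identity $\mathcal{S}'(\pi/2+x)=-\mathcal{S}'(\pi/2-x)$ at $-\beta$. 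Hence $\nabla\rho\neq 0$ on $\{z\neq 0,\rho=0\}$, and since the singular circles $\ln(|z|^2)\in\{0,\pi\}$ lie inside the region where $\mathcal{S}\equiv 1$ (so far from the critical locus of $\partial\rho/\partial\overline{w}$), the implicit function theorem applies and yields a $\mathcal{C}^{1,1}$ boundary.

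It remains to identify this zero set with $\partial\Omega$. The inclusion $\partial\Omega\subseteq\{z\neq 0,\rho=0\}$ follows from the preceding compactness argument together with continuity of $\rho$. For the reverse inclusion, given $(z_0,w_0)$ with $z_0\neq 0$ and $\rho(z_0,w_0)=0$, the nonvanishing of $\nabla\rho$ just established lets me perturb $(z_0,w_0)$ in the direction of $-\nabla\rho$ to decrease $\rho$ strictly, producing a sequence in $\Omega$ converging to $(z_0,w_0)$. The main (mild) obstacle is the handling of the two exceptional circles where $\partial\rho/\partial\overline{w}$ vanishes; this is settled by the derivative computation above and the property $\mathcal{S}'(\pi+\beta)<0$, which is precisely why condition (\ref{wormpaperprelimprop5}) was imposed.
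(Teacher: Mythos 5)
Your proof is correct, and it carefully fills in all the parts the paper omits (the paper explicitly calls this proof ``elementary'' and does not give one, so there is no paper proof to compare against). The structure is natural: boundedness and the compactness of $\overline{\Omega}$ inside $(\mathbb{C}\setminus\{0\})\times\mathbb{C}$ from the sign behavior of $\mathcal{S}$, openness from continuity, connectedness via the disc-bundle picture over the annulus, and the boundary description together with $\mathcal{C}^{1,1}$ regularity from the nonvanishing of $\nabla\rho$ on $\{z\neq 0,\rho=0\}$. You correctly identify the only delicate points: at the ``caps'' $\ln(|z|^2)\in\{-\beta,\pi+\beta\}$, where $\partial\rho/\partial\overline{w}$ vanishes on the zero set, you correctly fall back on $\partial\rho/\partial z$ and invoke property (\ref{wormpaperprelimprop5}) together with the symmetry (\ref{wormpapersissymmy}) to see that $\mathcal{S}'(\pi+\beta)<0$ and $\mathcal{S}'(-\beta)>0$, so $\nabla\rho$ still does not vanish; and at the $\mathcal{C}^{1,1}$-only circles $\ln(|z|^2)\in\{0,\pi\}$ you observe that $\rho=0$ forces $|w-e^{i\ln(|z|^2)}|^2=1$, so $\partial\rho/\partial\overline{w}\neq 0$ there. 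One small simplification worth noting: with the paper's stated definition of a $\mathcal{C}^{1,1}$ boundary (a local $\mathcal{C}^{1,1}$ defining function with nowhere-vanishing gradient), your invocation of the implicit function theorem is actually unnecessary — $\rho$ itself, restricted to a small neighborhood of any boundary point, already serves as the required defining function $r$, since you have established that $\rho$ is $\mathcal{C}^{1,1}$ on $(\mathbb{C}\setminus\{0\})\times\mathbb{C}$ and that $\nabla\rho$ is nonzero on $\{z\neq 0,\rho=0\}$.
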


\theoremstyle{remark}
\newtheorem*{wormpaperc11definition}{Remark}
\begin{wormpaperc11definition}
In this paper, we work with the following notion of $\mathcal{C}^{1,1}$-boundary: an open set $\emptyset\subsetneqq{U}\subsetneqq\mathbb{R}^k$ is said to have boundary of class $\mathcal{C}^{1,1}$, if for every boundary point $p$ of $U$ there exist an open neighborhood $V$ of $p$ in $\mathbb{R}^k$ and a function $r\colon{V}\to\mathbb{R}$ of class $\mathcal{C}^{1,1}$, such that $\nabla{r}$ vanishes nowhere on $V$ and $U\cap{V}=\{x\in{V}\colon{r(x)<0}\}$.
\end{wormpaperc11definition}

\theoremstyle{definition}
\newtheorem{setofradiusnommainwormpaper}[propo]{Notation}
\begin{setofradiusnommainwormpaper}
\label{setofradiusnommainwormpaper}
Let $M$ be a subset of $\mathbb{C}^n$ and let $r>0$. Then we define:
\begin{align*}
M(r):=\{z\in{\mathbb{C}^n}\colon{}\exists{x\in{M}}\text{ s.t.\ }||x-z||<r\}\text{.}
\end{align*}
$M(r)$ obviously is an open subset of $\mathbb{C}^n$.
\end{setofradiusnommainwormpaper}

We end this section with an {\emph{informal}} explanation of the intuition behind our constructions:

A classical worm domain admits a Stein neighborhood basis if the duration of the rotation at maximal radius is less than $\pi$. If the duration is exactly $\pi$ this fails to be true, as can be seen by refining the classical argument by K.\ Diederich and J. E. Forn{\ae}ss \cite{MR0430315}. In the case of the domain $\Omega$ defined above, we prevent this argument from working by drastically increasing the speed of the round-off, which leads to the boundary regularity dropping to $\mathcal{C}^{1,1}$. Using the fact that the function $g$ vanishes to infinite order in $0\in\mathbb{R}$, one can apply the Kontinuit{\" a}tssatz for annuli to open pseudoconvex neighborhoods of the closure of $\Omega$ to show that $\overline{\Omega}$ is not $s$-H-convex for any $s\geq{1}$. The details will be given in Section \ref{wormpapershconvsection}.

It is easy to construct a neighborhood basis for $\overline{\Omega}$ ({\emph{not}} a Stein one) by taking appropriate worm domains and increasing the radii of the rotating discs without changing the centers. This increase of the radii of course destroys pseudoconvexity. We counteract this by ``chopping off'' the ``bad part'', which is done by intersecting with a domain of half planes rotating around $0$ in the $w$-plane. This, however, leads to these sets not being neighborhoods anymore, as can be seen by considering $0$ in the $w$-plane. We finally resolve this issue by moving the center of the rotation from $0$ slightly in the direction of $-i$ and slightly slowing down the rotation (symmetrically around the angle ${\pi}/2$), which intuitively speaking amounts to introducing a small tilt. In the $w$-plane, $-i$ represents the ``out direction'' of $\Omega$, which exists because the duration of the rotation at maximal radius does not exceed $\pi$. Since $g$ is positive on $\mathbb{R}_{>0}$, one actually leaves the {\emph{closure}} of $\Omega$, when going from $0$ slightly in the direction of $-i$ in the $w$-plane, which is of course crucial for our construction to work. Since the purpose of the domain of rotating half planes is to help with the pseudoconvexity of the neighborhoods we are constructing, we have to apply these changes to both of the domains we are intersecting. The details will be given in Section \ref{wormpapersteinnhbdsection}.  

\section{Regarding $s$-H-Convexity}\label{wormpapershconvsection}
For this section we fix an ${\epsilon}_{0}>0$, such that $\sqrt{\mathcal{S}({\pi}+{\alpha})}+{\epsilon}_0<1$ and ${\epsilon}_0<g({\alpha})$. Given $0<{\epsilon}<{\epsilon}_0$, we define a map $H_{\epsilon}\colon{[{\pi},{\pi}+{\alpha}]}\to\mathbb{R}$ by
\begin{align*}
\phi\mapsto & \phantom{=}\sqrt{\mathcal{S}({\phi})}-\cos{({\phi}-{\pi})}+\frac{\epsilon}{2}\\
& =\frac{\epsilon}{2}-g({\phi}-{\pi})\text{.}
\end{align*}
By choice of ${\epsilon}_0$, we can apply the intermediate value theorem to find a zero $x_{\epsilon}\in{({\pi},{\pi}+{\alpha})}$ of $H_{\epsilon}$ for every $\epsilon\in{(0,{\epsilon}_0)}$, which is uniquely determined, since $H_{\epsilon}$ is strictly decreasing. By direct computation we get
\begin{align*}
{x_\epsilon}={\pi}+\frac{1}{-\ln{({\epsilon}/2)}}\text{ for all }\epsilon\in{(0,{\epsilon_0})}\text{.}
\end{align*}

{\emph{Roughly speaking}}, given some $0<{\epsilon}<{\epsilon}_0$ and an open pseudoconvex set containing ${\Omega}({\epsilon})$, we need to identify a point contained in said pseudoconvex set that is ``far away'' from $\Omega$ relative to $\epsilon$. By inspecting the explicit expression for $x_\epsilon$, one sees that ${x_\epsilon}-\pi$ is much larger than $\epsilon$ for small enough $0<\epsilon\ll{\epsilon_0}$. With this in mind, we will identify a point contained in any open pseudoconvex set containing $\Omega{({\epsilon})}$, whose distance to $\Omega$ is comparable to ${x_\epsilon}-\pi$. We accomplish this by applying the Kontinuit{\" a}tssatz for annuli.

The following lemma is the first step of the announced Kontinuit{\" a}tssatz argument. It deals with the boundaries of the annuli and the ``bottom annulus'':

\theoremstyle{plain}
\newtheorem{wormpaperannubd}[propo]{Lemma}
\begin{wormpaperannubd}
\label{wormpaperannubd}
Given $\epsilon\in{(0,{\epsilon}_0)}$, we have:
\begin{enumerate}
\item\label{wormpaperannubdprei}{For all $\phi$, ${\pi}\leq\phi\leq{x_\epsilon}$, the following set is contained in $\Omega{({\epsilon})}$:
\begin{align*}
\Big\{(z,w)\in\mathbb{C}^2\colon{|z|^2\in\{\exp{({\phi})},\exp{({\pi}-{\phi})}\}\text{ and }w=i\cdot\sin{({\phi})}}\Big\}\text{.}
\end{align*}}
\item\label{wormpaperannubdprzw}{The following set is contained in the boundary of ${\Omega}$ and hence in ${\Omega}({\epsilon})$:
\begin{align*}
\Big\{(z,w)\in\mathbb{C}^2\colon{\exp{(0)}\leq{}|z|^2\leq{\exp{({\pi})}}\text{ and }w=0}\Big\}\text{.}
\end{align*}
}
\end{enumerate}
\end{wormpaperannubd}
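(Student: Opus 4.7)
Part (\ref{wormpaperannubdprzw}) is essentially immediate: for $(z,0)$ with $|z|^{2}=e^{t}$, $t\in[0,\pi]$, property (\ref{wormpaperprelimprop3}) gives $\mathcal{S}(t)=1$, and $|0-e^{it}|^{2}=1$, so $\rho(z,0)=0$. By Lemma \ref{wormpaperbasicprop} these points lie on $\partial\Omega$, and the $\mathcal{C}^{1,1}$-boundary structure makes every such boundary point a limit of points of $\Omega$, whence $(z,0)\in\Omega(\epsilon)$ for every $\epsilon>0$.

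For Part (\ref{wormpaperannubdprei}), the plan is threefold: (i) collapse the two choices of $|z|^{2}$ to a single computation using the symmetry in property (\ref{wormpapersissymmy}); (ii) use property (\ref{wormpaperprelimprop7}) and the explicit form of $x_{\epsilon}$ to pin down exactly how far $(z,w)$ sits from $\Omega$; and (iii) perturb $w$ along the line toward the disc center. For (i), in the case $|z|^{2}=e^{\phi}$ a direct computation gives $w-e^{i\phi}=-\cos\phi$, hence $\rho(z,w)=\cos^{2}\phi-\mathcal{S}(\phi)$. In the case $|z|^{2}=e^{\pi-\phi}$ the center is $-\cos\phi+i\sin\phi$ and the squared distance is again $\cos^{2}\phi$, while the reformulation $\mathcal{S}(\pi-\phi)=\mathcal{S}(\phi)$ of property (\ref{wormpapersissymmy}) reduces the expression to the same $\cos^{2}\phi-\mathcal{S}(\phi)$.

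For step (ii), since $\phi\in[\pi,x_{\epsilon}]\subseteq[\pi,\pi+\alpha]$ and $\alpha<1/(4\pi)<\pi/2$, property (\ref{wormpaperprelimprop7}) gives $\sqrt{\mathcal{S}(\phi)}=\cos(\phi-\pi)-g(\phi-\pi)$ while $|\cos\phi|=\cos(\phi-\pi)$, which together yield the key identity $|\cos\phi|-\sqrt{\mathcal{S}(\phi)}=g(\phi-\pi)$. The computation $g(x_{\epsilon}-\pi)=\exp(\ln(\epsilon/2))=\epsilon/2$, combined with monotonicity of $g$ on $[0,\infty)$, then gives $|\cos\phi|-\sqrt{\mathcal{S}(\phi)}\leq\epsilon/2$. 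For step (iii), I would pick any $d\in(g(\phi-\pi),\epsilon)$ and move $w$ by distance $d$ along the segment toward the appropriate center, producing $w'$ whose distance to that center is $|\cos\phi|-d<\sqrt{\mathcal{S}(\phi)}$; hence $(z,w')\in\Omega$ and $\|w-w'\|=d<\epsilon$. The inequality $d<|\cos\phi|$ needed for $w'$ to lie strictly between $w$ and the center is automatic, since $\epsilon<\epsilon_{0}<g(\alpha)$ is minute while $|\cos\phi|\geq\cos\alpha$ is close to $1$. The only real content of the argument is the key identity $|\cos\phi|-\sqrt{\mathcal{S}(\phi)}=g(\phi-\pi)$, which is precisely what property (\ref{wormpaperprelimprop7}) and the choice of $x_{\epsilon}$ were engineered to deliver.
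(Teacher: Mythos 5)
Your proof is correct and follows essentially the same route as the paper: your key identity $|\cos\phi|-\sqrt{\mathcal{S}(\phi)}=g(\phi-\pi)$ together with $g(\phi-\pi)\le g(x_\epsilon-\pi)=\epsilon/2$ is exactly the paper's statement that $H_\epsilon\ge 0$ on $[\pi,x_\epsilon]$, and your explicit perturbation toward the disc center just spells out why $|w-\text{center}|<\sqrt{\mathcal{S}(\phi)}+\epsilon$ places $(z,w)$ in $\Omega(\epsilon)$. The only cosmetic difference is that you handle the radius $\exp(\pi-\phi)$ via the symmetry of $\mathcal{S}$ where the paper says "analogously".
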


\begin{proof}
Property \ref{wormpaperannubdprzw} is clear, so we only need to prove Property \ref{wormpaperannubdprei}. Let $\epsilon\in{(0,{\epsilon}_0)}$, let ${\pi}\leq\phi\leq{x_\epsilon}$ and consider a point $(z,w)=(z,i\cdot{\sin{({\phi})}})$ contained in the set from the statement of Property \ref{wormpaperannubdprei}. We restrict ourselves to the case $|z|^2={\exp{({\phi})}}$, since the other case can be handled analogously.\\
But then, owing to the choices we made, $(z,\widetilde{w})$ is contained in $\Omega$, whenever $\widetilde{w}$ is contained in the open disc in $\mathbb{C}$ centered at $\exp{(i\cdot{\phi})}$ with radius $\sqrt{\mathcal{S}({\phi})}>0$. So it suffices to prove that $|w-\exp{({i\cdot{\phi}})}|$ is less than $\sqrt{\mathcal{S}({\phi})}+{\epsilon}$.

Making use of the choices made above (in particular that $\pi{<}{x_\epsilon}<{\pi}+\alpha{<{{\pi}+\beta}}{<\pi}+{\pi}/2$ and $H_\epsilon\geq{0}$ on $[{\pi},x_{\epsilon}]$), we compute:
\begin{align*}
|w-\exp{({i\cdot{\phi}})}| & =|i\cdot{\sin{({\phi})}}-\exp{({i\cdot{\phi}})}|\\
& =\sqrt{(\cos{({\phi})})^2}\\
& ={\cos{({\phi}-{\pi})}}\\
& \leq{\cos{({\phi}-{\pi})}}+H_{\epsilon}({\phi})\\
& =\sqrt{\mathcal{S}({\phi})}+{\epsilon}/2\\
& <\sqrt{\mathcal{S}({\phi})}+{\epsilon}\text{,}
\end{align*}
as desired.
\end{proof}

Armed with Lemma \ref{wormpaperannubd}, we now finish the Kontinuit{\" a}tssatz argument:

\theoremstyle{plain}
\newtheorem{wormpaperannupush}[propo]{Lemma}
\begin{wormpaperannupush}
\label{wormpaperannupush}
Let $\epsilon\in{(0,{\epsilon}_0)}$ and let $D\subseteq\mathbb{C}^2$ be an open pseudoconvex set containing ${\Omega}({\epsilon})$.\\
Then, for every $\phi$, ${\pi}\leq\phi\leq{x_\epsilon}$, the following set is contained in $D$:
\begin{align*}
{F_\phi}:=\Big\{(z,w)\in\mathbb{C}^2\colon{\exp{({\pi}-{\phi})}\leq{}|z|^2\leq{\exp{({\phi})}}\text{ and }w=i\cdot\sin{({\phi})}}\Big\}\text{.}
\end{align*}
\end{wormpaperannupush}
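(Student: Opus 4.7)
The plan is to apply the Kontinuit{\"a}tssatz for analytic annuli to the pseudoconvex set $D$, using the one-parameter family $\{F_\phi\}_{\phi \in [\pi, x_\epsilon]}$. First I would note that each $F_\phi$ is the image of the closed planar annulus $\{\zeta \in \mathbb{C} : \exp((\pi-\phi)/2) \leq |\zeta| \leq \exp(\phi/2)\}$ under the holomorphic embedding $\zeta \mapsto (\zeta, i\sin(\phi))$, so the family varies continuously in $\phi$, and the two boundary circles of each $F_\phi$ are precisely those placed inside $\Omega(\epsilon) \subseteq D$ by the first part of Lemma~\ref{wormpaperannubd}. Since $\phi$ ranges over a compact interval, $\bigcup_\phi \partial F_\phi$ is then a compact subset of $D$.

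Next I would invoke the second part of Lemma~\ref{wormpaperannubd} to produce the ``base'' annulus $F_\pi \subseteq \partial\Omega \subseteq \Omega(\epsilon) \subseteq D$. Combined with the uniform containment of boundaries in a common compact subset of $D$, the Kontinuit{\"a}tssatz for annuli applied to the pseudoconvex $D$ then delivers $F_\phi \subseteq D$ for every $\phi \in [\pi, x_\epsilon]$, which is the desired conclusion.

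If I wanted to avoid citing the annulus version as a black box, I would verify it by a connectedness argument on $S := \{\phi \in [\pi, x_\epsilon] : F_\phi \subseteq D\}$: non-emptiness follows from the base annulus $F_\pi$, openness from compactness of $F_\phi$ and openness of $D$, and closedness from the fact that on any $F_\phi \subseteq D$ the plurisubharmonic function $-\log\operatorname{dist}(\,\cdot\,,\partial D)$ attains its maximum over $F_\phi$ on $\partial F_\phi$; since all $\partial F_\phi$ sit in a fixed compact subset of $D$, this yields a uniform lower bound on $\operatorname{dist}(F_\phi,\partial D)$ that survives passing to a Hausdorff limit as $\phi$ approaches a limit point of $S$.

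The main obstacle I anticipate is precisely justifying this closedness step: the textbook formulation of the Kontinuit{\"a}tssatz is usually phrased for analytic discs, so one must appeal to the maximum principle for plurisubharmonic functions on an annulus (which in turn requires knowing a priori that the annulus lies in the domain of plurisubharmonicity) to import the disc argument. Everything else is bookkeeping with the explicit parametrization of $F_\phi$ and the two halves of Lemma~\ref{wormpaperannubd}.
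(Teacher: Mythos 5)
Your proposal is correct and is essentially the paper's own argument: the paper's proof is the single sentence ``This follows from Lemma \ref{wormpaperannubd} via the Kontinuit\"atssatz for annuli,'' and you have simply filled in the details (parametrizing $F_\phi$ as an analytic annulus, using part (1) of Lemma \ref{wormpaperannubd} for the boundary circles and part (2) for the base annulus $F_\pi$, plus a standard connectedness/maximum-principle verification of the annulus Kontinuit\"atssatz). No gaps.
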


\begin{proof}
This follows from Lemma \ref{wormpaperannubd} via the Kontinuit{\" a}tssatz for annuli.
\end{proof}

In view of Lemma \ref{wormpaperannupush}, we need to identify a point contained in $F_{x_\epsilon}$ that is ``far away'' from $\Omega$. The obvious choice is the following:

For all $\epsilon\in{(0,{\epsilon}_0)}$ we define
\begin{align*}
p_{\epsilon}:=\left(\exp{\left(\frac{\pi}{4}\right)},i\cdot\sin{(x_{\epsilon})}\right)\in{F_{x_\epsilon}}\text{.}
\end{align*}

The following lemma shows that $p_\epsilon$ is indeed ``far away'' from $\Omega$:

\theoremstyle{plain}
\newtheorem{wormpaperlipdist}[propo]{Lemma}
\begin{wormpaperlipdist}
\label{wormpaperlipdist}
There exist constants $L>0$ and $\delta{>0}$, such that for all $\epsilon\in{(0,{\epsilon}_0)}$ we have
\begin{align*}
d({p_\epsilon},{\Omega})\geq\min\left\{{\delta},\frac{{x_\epsilon}-\pi}{L}\right\}\text{,}
\end{align*}
where $d(\cdot{},{\Omega})$ denotes the Euclidean distance of a point in $\mathbb{C}^2$ to $\Omega$.
\end{wormpaperlipdist}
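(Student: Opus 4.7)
The idea is to exploit that $\rho$ is globally $\mathcal{C}^{1,1}$ and that $\rho(p_\epsilon)$ is positive, comparable to $x_\epsilon - \pi$: a uniform Lipschitz estimate on $\rho$ near the points $p_\epsilon$ then converts this lower bound on $\rho(p_\epsilon)$ into the desired lower bound on $d(p_\epsilon, \Omega)$. The Lipschitz estimate can be made uniform in $\epsilon$ because the $p_\epsilon$ all lie in a single bounded set separated from $\{z=0\}$, since they converge to $(\exp(\pi/4), 0)$ as $\epsilon \to 0$.

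First I would compute $\rho(p_\epsilon)$ explicitly. Since $|z|^2 = \exp(\pi/2)$ at $p_\epsilon$, we have $\exp(i\ln(|z|^2)) = i$, and by Property \ref{wormpaperprelimprop3} we have $\mathcal{S}(\pi/2) = 1$. Using $\sin(x_\epsilon) = -\sin(x_\epsilon - \pi)$ together with $x_\epsilon - \pi \in (0, \alpha)$ and the inequality $|\sin(t)| \geq (3/4)|t|$ from Property \ref{wormpaperprelimprop6}, one obtains
\[
\rho(p_\epsilon) = (1 + \sin(x_\epsilon - \pi))^2 - 1 \geq 2\sin(x_\epsilon - \pi) \geq \tfrac{3}{2}(x_\epsilon - \pi).
\]
In particular $\rho(p_\epsilon) > 0$, confirming $p_\epsilon \notin \overline{\Omega}$.

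Next, I would choose $\delta > 0$ small enough (say $\delta < \exp(\pi/4)/2$) so that the union $\bigcup_{\epsilon \in (0, \epsilon_0)} \overline{B(p_\epsilon, \delta)}$ lies in a fixed compact set $K \subset (\mathbb{C}\setminus\{0\}) \times \mathbb{C}$; this is possible since the first coordinate of $p_\epsilon$ is constantly $\exp(\pi/4)$ and the second coordinate is bounded in modulus by $1$. Because $\mathcal{S}$ is $\mathcal{C}^{1,1}$ on $\mathbb{R}$ and the map $z \mapsto \exp(i\ln(|z|^2))$ is smooth on $\mathbb{C}\setminus\{0\}$, the function $\rho$ is $\mathcal{C}^{1,1}$ on its domain and hence Lipschitz on $K$ with some constant $L' > 0$. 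For $q \in \Omega$ with $\|q - p_\epsilon\| < \delta$ one has $q \in K$ and $\rho(q) < 0$, so
\[
\rho(p_\epsilon) \leq \rho(p_\epsilon) - \rho(q) \leq L' \|q - p_\epsilon\|,
\]
which yields $\|q - p_\epsilon\| \geq (3/(2L'))(x_\epsilon - \pi)$. For $q \in \Omega$ with $\|q - p_\epsilon\| \geq \delta$ the estimate is immediate. Taking the infimum over $q \in \Omega$ and setting $L := 2L'/3$ gives the claim. The only delicate point is that $L'$ does not depend on $\epsilon$; this is a routine compactness consideration and constitutes the main (though mild) obstacle in the proof.
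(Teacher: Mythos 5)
Your proposal is correct and follows essentially the same route as the paper: a lower bound $\rho(p_\epsilon)\geq c\,(x_\epsilon-\pi)$ obtained from the same explicit computation (using $\sin(x_\epsilon)=-\sin(x_\epsilon-\pi)$ and Property \ref{wormpaperprelimprop6}), combined with a uniform Lipschitz bound for $\rho$ on a compact set away from $\{z=0\}$ to convert it into a distance estimate. The only cosmetic difference is that the paper takes the compact set to be $\overline{\Omega(\delta)}$ rather than a union of balls around the $p_\epsilon$; both choices work.
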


\begin{proof}
Owing to Lemma \ref{wormpaperbasicprop}, we find a ${\delta}>0$, such that $\overline{{\Omega}({\delta})}$, the closure of $\Omega{({\delta})}$ in $\mathbb{C}^2$, is a compact subset of $(\mathbb{C}\setminus\{0\})\times\mathbb{C}$. So, since $\rho$ is of class $\mathcal{C}^1$ on $(\mathbb{C}\setminus\{0\})\times\mathbb{C}$ (see Section \ref{wormpaperprelimsection}), there exists an $L>0$, such that $\rho$ is Lipschitz continuous with Lipschitz constant $L$ on $\overline{{\Omega}({\delta})}$. That immediately gives the estimate
\begin{align*}
d(p,{\Omega})\geq\min\left\{{\delta},\frac{1}{L}\cdot\rho{(p)}\right\}\text{ for all }p\in{(\mathbb{C}\setminus\{0\})\times\mathbb{C}}\text{.}
\end{align*}
Hence, given $\epsilon\in{(0,{\epsilon}_0)}$, we only need to show that ${\rho}({p_\epsilon})\geq{x_\epsilon}-\pi$. Using that ${x_\epsilon}\in{({\pi},{\pi}+{\alpha})}$ and using the defining properties of $\alpha$, we compute:
\begin{align*}
{\rho}(p_{\epsilon}) & =\left|{i\cdot\sin{(x_{\epsilon})}-\exp{(i\cdot{\pi}/2)}}\right|^2-\mathcal{S}({\pi}/2)\\
& =\left|{i\cdot\sin{(x_{\epsilon})}-i}\right|^2-1\\
& \geq{}-2\sin{({x_\epsilon})}\\
& =2\sin{({x_\epsilon}-{\pi})}\\
& \geq{x_\epsilon}-\pi\text{,}
\end{align*}
as desired.
\end{proof}

We now combine all the previously developed ingredients to achieve the goal of this section:

\theoremstyle{plain}
\newtheorem{wormpapershnotpropos}[propo]{Proposition}
\begin{wormpapershnotpropos}
\label{wormpapershnotpropos}
$\overline{\Omega}$ is not $s$-H-convex for any real number $s\geq{1}$.
\end{wormpapershnotpropos}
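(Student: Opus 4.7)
The approach would be proof by contradiction, stitching together Lemmas \ref{wormpaperannupush} and \ref{wormpaperlipdist} and then exploiting the explicit formula $x_\epsilon=\pi+1/(-\ln(\epsilon/2))$. Suppose $\overline{\Omega}$ were $s$-H-convex for some $s\geq{1}$; then there is a constant $C\in(0,1]$ so that for every $\epsilon\in(0,1]$ one has an open pseudoconvex set $\Omega_\epsilon\subseteq\mathbb{C}^2$ with $\{d(\cdot,\overline{\Omega})<C\epsilon^s\}\subseteq\Omega_\epsilon\subseteq\{d(\cdot,\overline{\Omega})<\epsilon\}$. Since $d(\cdot,\overline{\Omega})=d(\cdot,\Omega)$, the left-hand neighborhood coincides with $\Omega(C\epsilon^s)$ in the notation of \ref{setofradiusnommainwormpaper}, so for $\epsilon$ small enough that $C\epsilon^s<\epsilon_0$ the hypotheses of Lemma \ref{wormpaperannupush} are satisfied with $D:=\Omega_\epsilon$ and with $C\epsilon^s$ playing the role of the lemma's $\epsilon$.

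Taking $\phi=x_{C\epsilon^s}$ in Lemma \ref{wormpaperannupush}, the point $p_{C\epsilon^s}\in F_{x_{C\epsilon^s}}\subseteq\Omega_\epsilon$. The right-hand inclusion $\Omega_\epsilon\subseteq\{d(\cdot,\overline{\Omega})<\epsilon\}$ then forces
\[
d(p_{C\epsilon^s},\Omega)<\epsilon.
\]
On the other hand, Lemma \ref{wormpaperlipdist} provides constants $L,\delta>0$, independent of $\epsilon$, with
\[
d(p_{C\epsilon^s},\Omega)\ \geq\ \min\!\left\{\delta,\ \frac{x_{C\epsilon^s}-\pi}{L}\right\}\ =\ \min\!\left\{\delta,\ \frac{1}{-L\ln(C\epsilon^s/2)}\right\}.
\]

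To close the argument I will observe that this is impossible for all sufficiently small $\epsilon$. Writing $-\ln(C\epsilon^s/2)=s\ln(1/\epsilon)+\ln(2/C)$, the right-hand entry of the minimum is of order $1/\log(1/\epsilon)$ as $\epsilon\to 0^+$, whereas the upper bound $\epsilon$ decays polynomially. Hence for all small enough $\epsilon$ one has simultaneously $\epsilon<\delta$ and $\epsilon<1/(-L\ln(C\epsilon^s/2))$, so the above lower bound strictly exceeds $\epsilon$, contradicting $d(p_{C\epsilon^s},\Omega)<\epsilon$. Since $s\geq 1$ and $C\in(0,1]$ were arbitrary, no such pair $(s,C)$ can exist.

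The conceptual heart of the argument—and the only genuine obstacle—is precisely this log-versus-polynomial mismatch. It is the quantitative trace of the infinite-order vanishing of $g$ at $0$ imposed in property \ref{wormpaperprelimprop7} of Section \ref{wormpaperprelimsection}; this is what yields $x_\epsilon-\pi=1/(-\ln(\epsilon/2))$ rather than a power of $\epsilon$, and it is exactly this failure of Hölder scaling that defeats $s$-H-convexity for every $s\geq 1$ at once. Beyond that, the proof is a clean assembly of the preceding lemmas; the only bookkeeping is to restrict attention to $\epsilon$ with $C\epsilon^s<\epsilon_0$ so that $x_{C\epsilon^s}$ and therefore $p_{C\epsilon^s}$ are defined.
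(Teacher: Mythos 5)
Your proof is correct and follows essentially the same route as the paper: assume $s$-H-convexity, feed the resulting pseudoconvex neighborhoods into Lemma \ref{wormpaperannupush} to land $p_{\epsilon'}$ inside them, bound its distance to $\Omega$ from below via Lemma \ref{wormpaperlipdist}, and derive a contradiction from the logarithmic decay of $x_{\epsilon'}-\pi$ versus the polynomial decay of the outer radius. The only (immaterial) difference is bookkeeping: the paper reparametrizes so the inner set is exactly $\Omega(\epsilon)$ and applies the lemmas at parameter $\epsilon$, while you apply them at parameter $C\epsilon^s$; both yield the same log-versus-power contradiction.
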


\begin{proof}
First note that $\overline{\Omega}$ is indeed compact. Assume for the sake of a contradiction that $\overline{\Omega}$ is $s$-H-convex for some $s\geq{1}$. So there exist a constant $0<C\leq{1}$ and a family $(D_{\epsilon})_{0<\epsilon\leq{1}}$ of open pseudoconvex subsets of $\mathbb{C}^2$, such that
\begin{align*}
\Omega{(C\cdot{\epsilon}^s)}\subseteq{D_\epsilon}\subseteq\Omega{({\epsilon})}\text{ for all }0<\epsilon\leq{1}\text{,}
\end{align*}
i.e.\ we have
\begin{align*}
\Omega{({\epsilon})}\subseteq{D_{({\epsilon}/C)^{1/s}}}\subseteq\Omega{(({\epsilon}/C)^{1/s})}\text{ for all }0<\epsilon\leq{C}\text{.}
\end{align*}
For all $0<{\epsilon}<\min\{{\epsilon_0},C\}$ we then get from Lemma \ref{wormpaperannupush} that
\begin{align*}
p_{\epsilon}\in{F_{x_\epsilon}}\subseteq{D_{({\epsilon}/C)^{1/s}}}\subseteq\Omega{(({\epsilon}/C)^{1/s})}\text{,}
\end{align*}
which, using Lemma \ref{wormpaperlipdist}, directly implies the estimate
\begin{align*}
\min\left\{{\delta},\frac{{x_\epsilon}-\pi}{L}\right\}\leq{d({p_\epsilon},{\Omega})}<{\left({\frac{\epsilon}{C}}\right)}^{1/s}\text{ for all }0<{\epsilon}<\min\{{\epsilon_0},C\}\text{.}
\end{align*}
So, since $\delta$, $L$ and $C$ are positive constants, we find a constant $K>0$ and an $0<\widehat{\epsilon}\ll\min\{{\epsilon_0},C\}$, such that
\begin{align*}
({x_\epsilon}-{\pi})^s<K\epsilon\text{ for all }0<{\epsilon}<\widehat{\epsilon}\text{.}
\end{align*}
Using that
\begin{align*}
0=H_{\epsilon}(x_{\epsilon})={\epsilon}/2-g({x_\epsilon}-{\pi})={\epsilon}/2-{\exp}(-1/({x_\epsilon}-{\pi}))\text{,}
\end{align*}
for all $0<{\epsilon}<{\epsilon}_0$, we get that
\begin{align*}
{\left({\frac{1}{-\ln{({\epsilon}/2)}}}\right)}^s<K{\epsilon}\text{ for all }0<{\epsilon}<\widehat{\epsilon}\text{,}
\end{align*}
and we arrive at the desired contradiction.
\end{proof}

\section{Existence of a Stein Neighborhood Basis}\label{wormpapersteinnhbdsection}

In this section we construct a Stein neighborhood basis for $\overline{\Omega}$. We {\emph{fix}} an ${\epsilon}>0$ for the remainder of this section. It suffices to find an open pseudoconvex subset $D$ of $\mathbb{C}^2$ satisfying $\overline{\Omega}\subseteq{D}\subseteq{\Omega}({\epsilon})$.

We start by defining the domains of ``half planes rotating in the $w$-plane'' announced in Section \ref{wormpaperprelimsection}.

\theoremstyle{definition}
\newtheorem{wormpaperrotatinghp}[propo]{Definition}
\begin{wormpaperrotatinghp}
\label{wormpaperrotatinghp}
For every $\delta\in{(0,1)}$ and every $t\in{[0,1)}$ we define $H_{t}^{({\delta})}$ to be the subset of $\mathbb{C}^2$ consisting of all points $(z,w)$ satisfying $z\neq{0}$ and
\begin{align*}
t<\operatorname{Re}\Bigg( & {\left({w+i\cdot\sin{\left(\frac{\delta\pi}{2(1-{\delta})}\right)}}\right)}\\
& \cdot\exp\left({-i\cdot\left({\frac{\delta\pi}{2}+(1-{\delta})\ln{({{|z|}^2})}}\right)}\right)\Bigg)\text{.}
\end{align*}
Furthermore we will denote the set $H_{0}^{({\delta})}$ simply as $H^{({\delta})}$.
\end{wormpaperrotatinghp}

The expression $\sin{({\delta\pi}/(2(1-{\delta})))}$ measures by how much the center of the rotation is moved in the direction of $-i$. In the exponential-term, $\delta$ measures how much the rotation is slowed down symmetrically around the angle ${\pi}/2$. The expression $\sin{({\delta\pi}/(2(1-{\delta})))}$ was chosen specifically to ensure that an appropriate version of Lemma \ref{wormpapercrucialestimate} (see below) holds true.

Before we can define the domains of ``discs rotating in the $w$-plane'', we need to approximate $\mathcal{S}$ from above by smooth concave functions:

\theoremstyle{plain}
\newtheorem{wormpaperessetadef}[propo]{}
\begin{wormpaperessetadef}
\label{wormpaperessetadef}
There exists an ${\eta_0}$, $0<{\eta_0}\ll{1/2}$, such that for all $\eta\in{(0,{\eta_0})}$ there exist a $\mathcal{C}^\infty$-function $\mathcal{S}_{\eta}\colon\mathbb{R}\to\mathbb{R}$, a ${\beta_\eta}>\beta$ and an $x_\eta{>\pi}$ (not to be confused with the $x_\epsilon$ appearing in Section \ref{wormpapershconvsection}) with the following properties:
\begin{enumerate}
\item{$\mathcal{S}_\eta$ is concave on $\mathbb{R}$ and satisfies $\mathcal{S}_{\eta}(x+{\pi}/2)=\mathcal{S}_{\eta}(-x+{\pi}/2)$ for all $x\in\mathbb{R}$,}
\item{$\mathcal{S}_\eta$ is $\leq{1+\eta}$ on $\mathbb{R}$ and $\equiv{1+\eta}$ on a neighborhood of $[0,{\pi}]$ in $\mathbb{R}$,}
\item{$\mathcal{S}+{\eta}/2\leq\mathcal{S}_{\eta}\leq\mathcal{S}+{3\eta}/2$ on $\mathbb{R}$,}
\item{$S_{\eta}({\pi}+{\beta_\eta})=0$ and $S_{\eta}'({\pi}+{\beta_\eta})\neq{0}$,}
\item{$S_\eta$ is $>0$ on $({-\beta_\eta},\pi{+\beta_\eta})$ and $<0$ on $\mathbb{R}\setminus{[{-\beta_\eta},\pi{+\beta_\eta}]}$,}
\item{$x_\eta\in{}({\pi},\pi+{\beta_\eta})$ and $\mathcal{S}_{\eta}(x_{\eta})=1$; furthermore, $\mathcal{S}_\eta$ is $>1$ on $({\pi},x_{\eta})$ and $<1$ on $(x_{\eta},{\infty})$,}
\item{we have $-\mathcal{S}_{\eta}''({\phi})\geq{100}|\mathcal{S}_{\eta}'({\phi})|$, whenever ${\pi}/2\leq\phi\leq{x_\eta}$.}
\end{enumerate}
\end{wormpaperessetadef}

\begin{proof}
For all $\gamma{>0}$ we fix a $\mathcal{C}^\infty$-function $\Phi_{\gamma}\colon\mathbb{R}\to\mathbb{R}$, such that
\begin{itemize}
\item{$\Phi_{\gamma}'\geq{0}$ on $\mathbb{R}$,}
\item{$\Phi_{\gamma}\equiv{0}$ on $(-{\infty},{\pi}+{\gamma}/4]$ and $\Phi_{\gamma}\equiv{1}$ on $[{\pi}+3{\gamma}/4,{\infty})$.}
\end{itemize}
One now readily checks that, if $0<{\eta_0}\ll{1/2}$ is chosen {\emph{small enough}}, then, for all $\eta\in{(0,{\eta_0})}$, one can pick a {\emph{small}} $0<{\gamma}({\eta})\ll\alpha$, such that the function $S_{\eta}\colon\mathbb{R}\to\mathbb{R}$, given by
\begin{align*}
x\mapsto{1+\eta}+\int_{\frac{\pi}{2}}^{\frac{\pi}{2}+\left|{x-\frac{\pi}{2}}\right|}{\mathcal{S}'(t)\cdot\Phi_{{\gamma}({\eta})}(t)\text{ }dt}
\end{align*}
and the implicitly defined ${\beta}_\eta$ and $x_\eta$ have all the desired properties.
\end{proof}

We now define the domains of ``discs rotating in the $w$-plane'', also announced in Section \ref{wormpaperprelimsection}. It is important to note that these domains are {\emph{not}} pseudoconvex:

\theoremstyle{definition}
\newtheorem{wormpaperrotatingdisc}[propo]{Definition}
\begin{wormpaperrotatingdisc}
\label{wormpaperrotatingdisc}
Adopt the notation from \ref{wormpaperessetadef}. Then, for all $\delta\in{(0,1)}$ and for all $\eta\in{(0,{\eta_0})}$, we define a map ${\rho}_{{\delta},{\eta}}\colon{(\mathbb{C}\setminus\{0\})}\times\mathbb{C}\to\mathbb{R}$ by
\begin{align*}
(z,w)\mapsto & {\left|{w+i\cdot\sin{\left(\frac{\delta\pi}{2(1-{\delta})}\right)}-\exp\left({i\cdot\left({\frac{\delta\pi}{2}+(1-{\delta})\ln{({{|z|}^2})}}\right)}\right)}\right|}^2\\
& -\mathcal{S}_{\eta}\left({{{\frac{\delta\pi}{2}+(1-{\delta})\ln{({{|z|}^2})}}}}\right)\text{,}
\end{align*}
and we define ${D}^{({\delta},{\eta})}$ to be the subset of $\mathbb{C}^2$ consisting of all points $(z,w)$ satisfying $z\neq{0}$ and ${\rho}_{{\delta},\eta}(z,w)<0$.
\end{wormpaperrotatingdisc}

It should be noted that ${D}^{({\delta},{\eta})}$ is essentially defined the same way as $\Omega$ (resp.\ a classical worm domain), apart from the fact that $\mathcal{S}$ is replaced by $\mathcal{S}_\eta$ and that the position of the center and the speed of the rotation have been adjusted slightly (in the same way as above).

We now show that $\overline{\Omega}\subseteq{D}^{({\delta},{\eta})}\subseteq{\Omega}({\epsilon})$ for suitable choices of $\delta$ and $\eta$. Since ${D}^{({\delta},{\eta})}$ is {\emph{not}} pseudoconvex, however, some additional considerations are needed in order to achieve the goal stated in the beginning of this section.

\theoremstyle{plain}
\newtheorem{wormpaperrdconclos}[propo]{Lemma}
\begin{wormpaperrdconclos}
\label{wormpaperrdconclos}
There exists an ${\eta}_{1}({\epsilon})\in{(0,{\eta_0})}$, such that for each $\eta\in{(0,{\eta_1}({\epsilon}))}$ there exists a ${d_2}({\epsilon},{\eta})\in{(0,1/2)}$ with the property that
\begin{align*}
\overline{\Omega}\subseteq{D}^{({\delta},{\eta})}\subseteq{\Omega}({\epsilon})\text{,}
\end{align*}
whenever $0<{\delta}<{d_2}({\epsilon},{\eta})$.
\end{wormpaperrdconclos}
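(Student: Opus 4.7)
The plan is to establish the two inclusions $\overline{\Omega}\subseteq D^{(\delta,\eta)}$ and $D^{(\delta,\eta)}\subseteq\Omega(\epsilon)$ separately, each as a perturbation estimate that will hold once $\delta$ is small compared to $\eta$ and $\eta$ is small compared to an appropriate function of $\epsilon$. Throughout, I write $\phi_0:=\ln|z_0|^2$, $\psi_0:=\delta\pi/2+(1-\delta)\phi_0$, and $c_\delta:=\sin(\delta\pi/(2(1-\delta)))$ for a point $(z_0,w_0)$ with $z_0\neq 0$, and note that both $\psi_0-\phi_0=\delta(\pi/2-\phi_0)$ and $c_\delta$ are $O(\delta)$ as long as $\phi_0$ stays in a bounded range, which is automatic on both $\overline{\Omega}$ and $D^{(\delta,\eta)}$ for $\eta\in(0,\eta_0)$, $\delta\in(0,1/2)$.

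For the first inclusion, I would start from $(z_0,w_0)\in\overline{\Omega}$, so that $\phi_0\in[-\beta,\pi+\beta]$ and $|w_0-e^{i\phi_0}|\leq\sqrt{\mathcal{S}(\phi_0)}$. The triangle inequality then gives $|w_0+ic_\delta-e^{i\psi_0}|\leq\sqrt{\mathcal{S}(\phi_0)}+A\delta$ for a suitable constant $A$; squaring and using $\mathcal{S}\leq 1$ yields $|w_0+ic_\delta-e^{i\psi_0}|^2\leq\mathcal{S}(\phi_0)+B\delta$, while property 3 of the $\mathcal{S}_\eta$-construction plus Lipschitz continuity of $\mathcal{S}$ give $\mathcal{S}_\eta(\psi_0)\geq\mathcal{S}(\psi_0)+\eta/2\geq\mathcal{S}(\phi_0)-B'\delta+\eta/2$. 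Subtracting, $\rho_{\delta,\eta}(z_0,w_0)\leq(B+B')\delta-\eta/2<0$ as soon as $\delta<\eta/(2(B+B'))$.

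The more delicate inclusion is $D^{(\delta,\eta)}\subseteq\Omega(\epsilon)$, which I would prove through the uniform bound $d((z_0,w_0),\overline{\Omega})\leq C(\delta+\sqrt{\eta})$ for every $(z_0,w_0)\in D^{(\delta,\eta)}$; since $\Omega$ is dense in $\overline{\Omega}$ (a consequence of the $\mathcal{C}^{1,1}$-boundary from Lemma \ref{wormpaperbasicprop}, via the inward normal direction), this upgrades to $d((z_0,w_0),\Omega)\leq C(\delta+\sqrt{\eta})<\epsilon$ under the obvious smallness condition. To produce a nearby point of $\overline{\Omega}$, I would split on the position of $\psi_0\in(-\beta_\eta,\pi+\beta_\eta)$. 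If $\psi_0\in[-\beta,\pi+\beta]$, set $z':=e^{\psi_0/2}\cdot z_0/|z_0|$ and $w':=w_0+ic_\delta$; then $z_0-z'$ and $w_0-w'$ are $O(\delta)$, and $|w'-e^{i\psi_0}|^2<\mathcal{S}_\eta(\psi_0)\leq\mathcal{S}(\psi_0)+3\eta/2$, so radially rescaling $w'$ toward $e^{i\psi_0}$ to land on the disc of radius $\sqrt{\mathcal{S}(\psi_0)}$ costs at most $\sqrt{\mathcal{S}(\psi_0)+3\eta/2}-\sqrt{\mathcal{S}(\psi_0)}\leq\sqrt{3\eta/2}$, producing a point of $\overline{\Omega}$ within $O(\delta+\sqrt{\eta})$ of $(z_0,w_0)$. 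If instead $\psi_0\in(\pi+\beta,\pi+\beta_\eta)$ (or symmetrically $(-\beta_\eta,-\beta)$), I use that $\beta_\eta-\beta=O(\eta)$ and $\mathcal{S}_\eta(\psi_0)=O(\eta)$ on this range (by Lipschitzness and $\mathcal{S}_\eta(\pi+\beta_\eta)=0$) to see that the boundary point $(e^{(\pi+\beta)/2}\cdot z_0/|z_0|,e^{i(\pi+\beta)})\in\partial\Omega$ is within $O(\delta+\sqrt{\eta})$ of $(z_0,w_0)$.

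Combining, I would choose $\eta_1(\epsilon)\in(0,\eta_0)$ so small that $C\sqrt{\eta_1(\epsilon)}<\epsilon/2$, and for each $\eta\in(0,\eta_1(\epsilon))$ pick $d_2(\epsilon,\eta)\in(0,1/2)$ with $d_2<\eta/(2(B+B'))$ (for the first inclusion) and $C d_2<\epsilon/2$ (for the second). The main obstacle is the boundary regime $\psi_0\approx\pi+\beta$ in the second inclusion: because the $w$-disc in $D^{(\delta,\eta)}$ has radius $\sqrt{\mathcal{S}_\eta(\psi_0)}$ rather than $\mathcal{S}_\eta(\psi_0)$, the perturbation bound degrades from $O(\eta)$ to $O(\sqrt{\eta})$ there, which is the reason $\eta_1(\epsilon)$ must be taken of order $\epsilon^2$ rather than $O(\epsilon)$.
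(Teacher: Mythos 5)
Your proof is correct and fills in exactly the ``straightforward calculation using the properties in \ref{wormpaperessetadef}'' that the paper leaves to the reader: the first inclusion follows from the lower bound $\mathcal{S}_\eta \geq \mathcal{S} + \eta/2$ once the perturbation of the center and the reparametrisation of the angle are absorbed into an $O(\delta)$ error, and the second from the upper bound $\mathcal{S}_\eta \leq \mathcal{S} + 3\eta/2$ together with the case split at $\psi_0 = \pi + \beta$, yielding the uniform $O(\delta + \sqrt{\eta})$ distance estimate. One small remark: the appeal to the $\mathcal{C}^{1,1}$-regularity of the boundary to pass from $d(\cdot,\overline{\Omega})$ to $d(\cdot,\Omega)$ is superfluous, since $\Omega$ is automatically dense in its own closure, so $d(\cdot,\Omega)=d(\cdot,\overline{\Omega})$ holds for any set.
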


\begin{proof}
This follows from a straightforward calculation using the properties in \ref{wormpaperessetadef}.
\end{proof}

As explained in Section \ref{wormpaperprelimsection}, we want to intersect the domains of ``discs rotating in the $w$-plane'' with suitable domains of ``half planes rotating in the $w$-plane'', with the aim of obtaining a pseudoconvex neighborhood of $\overline{\Omega}$. So we of course need the domains of ``half planes rotating in the $w$-plane'' to contain the closure of $\Omega$.

In order to establish this, we need the crucial estimate provided by Lemma \ref{wormpapercrucialestimate} below. If the function $g$ was replaced by the $0$-function in a small neighborhood of $0\in\mathbb{R}$, then $\overline{\Omega}$ could not possibly have a Stein neighborhood basis, as the Kontinuit{\" a}tssatz for annuli shows. Hence our construction {\emph{has to}} make use of the fact that $g>0$ on an interval of the form $(0,{\mu})$ for some small $0<{\mu}\ll{1}$. We make use of that fact only once in the entire construction of the Stein neighborhood basis for $\overline{\Omega}$, namely in the proof of Lemma \ref{wormpapercrucialestimate}, the discovery of which was one of the main obstacles in our construction. In fact, the seemingly arbitrary expression $\sin{({\delta\pi}/(2(1-{\delta})))}$ featuring in Definition \ref{wormpaperrotatinghp} was chosen specifically with this lemma in mind.

\theoremstyle{plain}
\newtheorem{wormpapercrucialestimate}[propo]{Lemma}
\begin{wormpapercrucialestimate}
\label{wormpapercrucialestimate}
There exists a $0<{d_1}<1$, such that we have the following estimate for all ${\delta},{\psi}\in\mathbb{R}$ with $0<{\delta}<d_1$ and $-\beta\leq\psi\leq{\pi}+\beta$:
\begin{align*}
0 & <\cos{\left({\delta\left({\frac{\pi}{2}-\psi}\right)}\right)}-\sqrt{\mathcal{S}({\psi})}\\
& \phantom{<}+\sin{\left({{\psi}+\delta\left({\frac{\pi}{2}-\psi}\right)}\right)}\cdot\sin{\left({\frac{\delta\pi}{2({1-\delta})}}\right)}\text{.}
\end{align*}
\end{wormpapercrucialestimate}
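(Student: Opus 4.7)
My plan is a case analysis on $\psi$ that exploits a symmetry of the expression and isolates one genuinely difficult region where the function $g$ becomes essential. Denote the right-hand side of the desired strict inequality by $F(\delta, \psi)$. A direct substitution using the symmetry $\mathcal{S}(\psi) = \mathcal{S}(\pi - \psi)$ (which follows from property (2)) together with $\sin(\pi - x)=\sin x$ and $\cos(-x)=\cos x$ yields $F(\delta, \psi) = F(\delta, \pi - \psi)$, so I may restrict to $\psi \in [\pi/2, \pi+\beta]$.

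I would split this interval into three regions. On $[\pi+\alpha, \pi+\beta]$, properties (4) and (5) together with continuity of $\mathcal{S}$ give $1 - \sqrt{\mathcal{S}(\psi)} \ge 1 - \sqrt{\mathcal{S}(\pi+\alpha)} > 0$; since $F(0,\psi) = 1 - \sqrt{\mathcal{S}(\psi)}$ and $F$ is continuous in $\delta$, this region is handled for all sufficiently small $\delta$. On $[\pi/2, \pi]$, where $\sqrt{\mathcal{S}(\psi)} \equiv 1$ by property (3), I would use Taylor expansion to order two in $\delta$. A direct computation gives $F(0,\psi) = 0$, $\partial_\delta F(0,\psi) = (\pi/2)\sin\psi \ge 0$, and $\partial_\delta^2 F(0,\psi) = -(\pi/2-\psi)^2 + \pi\bigl((\pi/2-\psi)\cos\psi + \sin\psi\bigr)$; a short elementary calculus check (substitute $\psi' = \psi-\pi/2$ and use the derivative $\psi'(\pi\cos\psi' - 2)$ to locate critical points) shows $\partial_\delta^2 F(0,\psi) \ge \pi^2/4$ on this interval. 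Combined with a uniform bound on the $O(\delta^3)$ Taylor remainder, one gets $F(\delta,\psi) \ge \delta^2\pi^2/8 + O(\delta^3) > 0$ for small $\delta$.

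The main obstacle is $\psi \in [\pi, \pi+\alpha]$, where the function $g$ enters crucially. Writing $\psi = \pi+\xi$ with $\xi \in [0,\alpha]$, property (7) gives $\sqrt{\mathcal{S}(\pi+\xi)} = \cos\xi - g(\xi)$. Introduce
\[
p := \frac{\delta\pi}{2} - \xi(1-\delta), \qquad q := \frac{\delta\pi}{2} + \xi(1+\delta), \qquad v := \frac{\delta\pi}{2(1-\delta)}.
\]
Applying the sum-to-product identity $\cos A - \cos B = -2\sin((A+B)/2)\sin((A-B)/2)$ to $\cos(\delta(\pi/2+\xi)) - \cos\xi$ (which produces exactly $-2\sin(q/2)\sin(p/2)$), together with $\sin(\pi+\theta)=-\sin\theta$ and the double angle formula $\sin p = 2\sin(p/2)\cos(p/2)$, I would derive the identity
\[
F(\delta, \pi+\xi) = 2\sin(p/2)\bigl[\cos(p/2)\sin v - \sin(q/2)\bigr] + g(\xi).
\]
Denote the bracketed factor by $M$. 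The sign analysis then proceeds as follows: (a) at $\xi = v$ one has $p = 0$ and $q = 2v$, so $\sin(p/2) = 0$ and $M = \sin v - \sin v = 0$; (b) $\partial_\xi M = \frac{1-\delta}{2}\sin(p/2)\sin v - \frac{1+\delta}{2}\cos(q/2)$ is strictly negative uniformly in $\xi \in [0,\alpha]$ for $\delta$ sufficiently small, since $q/2 \in [0,\pi/4]$ there gives $\cos(q/2) \ge \sqrt{2}/2$ while the first term is bounded in absolute value by $\delta\pi/4$; (c) hence $M$ decreases strictly through zero at $\xi = v$, so $M > 0$ on $[0,v)$ and $M < 0$ on $(v,\alpha]$; (d) $p$ is similarly strictly decreasing in $\xi$ and vanishes at $\xi = v$, while $|p/2| < \pi/2$ ensures $\sin(p/2)$ has the same sign as $p$; (e) consequently $\sin(p/2) \cdot M \ge 0$ throughout $[0,\alpha]$.

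Strict positivity of $F$ now follows case by case: for $\xi \in (0,\alpha]\setminus\{v\}$ both summands of the identity are strictly positive ($\sin(p/2) M > 0$ and $g(\xi)>0$); at $\xi = v$ we have $F = g(v) > 0$; at $\xi = 0$, even though $g(0) = 0$, we still get $\sin(p/2) = \sin(\delta\pi/4) > 0$ and $M(0) > M(v) = 0$, so $F > 0$. Taking $d_1$ to be the minimum of the thresholds arising in the three regions gives the desired uniform bound. The hardest step is the discovery of the identity in the third case; as hinted in the paragraph preceding the lemma statement, this seems to be exactly what motivated the authors' choice of the shift $\sin(\delta\pi/(2(1-\delta)))$ in Definition \ref{wormpaperrotatinghp}. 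Once the identity is in hand, the sign analysis is essentially mechanical.
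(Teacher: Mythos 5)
Your proposal is correct, and the global strategy coincides with the paper's: exploit the symmetry $F(\delta,\psi)=F(\delta,\pi-\psi)$ to restrict to one half of $[-\beta,\pi+\beta]$, split that half into three subintervals meeting at (roughly) $\pi$ and $\pi+\alpha$, handle the far region by a margin between $\sqrt{\mathcal{S}}$ and $1$, and isolate $g$ in the near-$\pi$ region via property~(7), which is where the strict positivity is ultimately rescued. The tactics in two of the three subcases, however, genuinely differ. On $[\pi/2,\pi]$ you use a second-order Taylor expansion in $\delta$ at $\delta=0$ together with a uniform remainder bound; the paper instead uses the elementary chain $\sin(\psi+\delta(\pi/2-\psi))\geq\sin(\delta\pi/2)$, $\cos(\delta(\pi/2-\psi))\geq\cos(\delta\pi/2)$, $\sin(v)>\sin(\delta\pi/2)$ to reduce to $\cos(\delta\pi/2)(1-\cos(\delta\pi/2))>0$, which is shorter and avoids remainder estimates. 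On $[\pi,\pi+\alpha]$ both you and the paper derive an identity $F=g+M$ and then argue $M\geq 0$, but your $M$ is put in the factored sum-to-product form $2\sin(p/2)\bigl[\cos(p/2)\sin v-\sin(q/2)\bigr]$ and its non-negativity follows cleanly from monotonicity considerations around the common zero at $\xi=v$, whereas the paper leaves $M$ unfactored and argues $M\geq 0$ by the two-case split $\phi\geq0$/$\phi<0$ with explicit $\sin$/$\cos$/$\tan$ bounds from property~(6). Your factored form makes the role of the shift $\sin(\delta\pi/(2(1-\delta)))$ structurally transparent (it forces $M$ and $\sin(p/2)$ to vanish at the same $\xi$), while the paper's unfactored estimate is more hands-on but reuses property~(6) directly, which is why the paper's boundary between the first and third subintervals is $\pi+\alpha/2$ rather than your $\pi+\alpha$. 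Both routes reach the conclusion; yours buys conceptual clarity on the near-$\pi$ region at the cost of a slightly heavier middle-region argument.
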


The proof of Lemma \ref{wormpapercrucialestimate} can be found in Section \ref{wormpapertechnlemmasection}. Using this lemma, we can now show that the domains of ``half planes rotating in the $w$-plane'' contain the closure of ${\Omega}$.

\theoremstyle{plain}
\newtheorem{wormpaperrhpconclos}[propo]{Lemma}
\begin{wormpaperrhpconclos}
\label{wormpaperrhpconclos}
Let $d_1\in{(0,1)}$ be as in Lemma \ref{wormpapercrucialestimate}. Then, given $\delta\in{(0,{d_1})}$, there exists a $t_{\delta}\in{(0,1)}$, such that
\begin{align*}
\overline{\Omega}\subseteq{H_{t}^{({\delta})}}
\end{align*}
for all $0<t<t_{\delta}$.
\end{wormpaperrhpconclos}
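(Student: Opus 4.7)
The plan is to show that every point of $\overline{\Omega}$ yields a value of the real-part expression in Definition \ref{wormpaperrotatinghp} which is bounded below by the quantity appearing on the right-hand side of Lemma \ref{wormpapercrucialestimate}, and then to invoke compactness.

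First I will argue that $\overline{\Omega}$ is a compact subset of $(\mathbb{C}\setminus\{0\})\times\mathbb{C}$. For any $(z,w)\in\overline{\Omega}$, setting $\psi:=\ln(|z|^2)$, we must have $\mathcal{S}(\psi)\geq{0}$, so the sign properties of $\mathcal{S}$ listed in \ref{wormpaperprelimprop4} of Section \ref{wormpaperprelimsection} (combined with the symmetry from \ref{wormpapersissymmy}) force $\psi\in{[-\beta,\pi+\beta]}$. In particular $|z|$ is bounded away from $0$, and we may write $w=\exp(i\psi)+u$ for some complex $u$ with $|u|\leq\sqrt{\mathcal{S}(\psi)}$.

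Next I would compute the real-part expression defining $H_{t}^{(\delta)}$ at such a point. Setting $A:=\sin(\delta\pi/(2(1-\delta)))$ and $\theta:=\delta\pi/2+(1-\delta)\psi$, a direct expansion using $\psi-\theta=-\delta(\pi/2-\psi)$ and $\theta=\psi+\delta(\pi/2-\psi)$ gives
\begin{align*}
\operatorname{Re}\bigl((w+iA)e^{-i\theta}\bigr) & =\cos\bigl(\delta(\pi/2-\psi)\bigr)+\operatorname{Re}\bigl(u\,e^{-i\theta}\bigr)\\
& \phantom{=}+\sin\bigl(\psi+\delta(\pi/2-\psi)\bigr)\cdot\sin\bigl(\delta\pi/(2(1-\delta))\bigr)\text{.}
\end{align*}
Since $\operatorname{Re}(u\,e^{-i\theta})\geq-|u|\geq-\sqrt{\mathcal{S}(\psi)}$, the right-hand side is bounded below by precisely the expression
\[
\cos\bigl(\delta(\pi/2-\psi)\bigr)-\sqrt{\mathcal{S}(\psi)}+\sin\bigl(\psi+\delta(\pi/2-\psi)\bigr)\cdot\sin\bigl(\delta\pi/(2(1-\delta))\bigr)
\]
appearing in Lemma \ref{wormpapercrucialestimate}. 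For $\delta\in(0,d_1)$, that lemma says this quantity is strictly positive for every $\psi\in[-\beta,\pi+\beta]$.

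Finally, since this lower bound is a continuous function of $\psi$ on the compact interval $[-\beta,\pi+\beta]$, it attains a strictly positive minimum $m_{\delta}>0$. Choosing any $t_{\delta}\in(0,\min\{1,m_{\delta}\})$ then yields $\overline{\Omega}\subseteq{H_{t}^{(\delta)}}$ for all $0<t<t_{\delta}$, as required. I expect no serious obstacle here: the algebraic identity matching the bound to Lemma \ref{wormpapercrucialestimate} is the only slightly delicate step, and all the real work has already been absorbed into that lemma (whose proof is deferred to Section \ref{wormpapertechnlemmasection}).
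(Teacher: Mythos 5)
Your proof is correct and takes essentially the same route as the paper: use $\rho(z,w)\leq 0$ to locate $\psi=\ln(|z|^2)\in[-\beta,\pi+\beta]$ and bound $|w-e^{i\psi}|$ by $\sqrt{\mathcal{S}(\psi)}$, expand the real part defining $H_t^{(\delta)}$ to match the left-hand side of Lemma \ref{wormpapercrucialestimate}, and then choose $t_\delta$ by compactness. The only cosmetic difference is that the paper takes the positive minimum of the real-part expression over the compact set $\overline{\Omega}$ after first establishing $\overline{\Omega}\subseteq H_0^{(\delta)}$, whereas you take the minimum of the lower-bound function over the compact interval $[-\beta,\pi+\beta]$; both are the same compactness argument.
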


\begin{proof}
Let $\delta\in{(0,{d_1})}$. Owing to the compactness of $\overline{\Omega}$, it suffices to show that $\overline{\Omega}\subseteq{H_{0}^{({\delta})}}$. To this end let $(z,w)\in\overline{\Omega}$. Lemma \ref{wormpaperbasicprop} shows that $z\neq{0}$ and $\rho{(z,w)}\leq{0}$. In particular, this implies that $\psi{:=\ln}({|z|}^2)\in{[{-\beta},{{\pi}+\beta}]}$ and $|w-\exp{(i{\psi})}|\leq\sqrt{\mathcal{S}({\psi})}$.

Hence, using that $\operatorname{Re}({\tau})\geq{-|{\tau}|}$ for all $\tau\in\mathbb{C}$ and writing $w=\exp{(i{\psi})}+(w-\exp{(i{\psi})})$, we get
\begin{align*}
& \operatorname{Re}\Bigg({\left({w+i\cdot\sin{\left(\frac{\delta\pi}{2(1-{\delta})}\right)}}\right)}\\
& \phantom{\operatorname{Re}\Bigg(){}}\cdot\exp\left({-i\cdot\left({\frac{\delta\pi}{2}+(1-{\delta})\ln{({{|z|}^2})}}\right)}\right)\Bigg)\\
\geq & \operatorname{Re}\Bigg({\left({\exp{(i{\psi})}+i\cdot\sin{\left(\frac{\delta\pi}{2(1-{\delta})}\right)}}\right)}\\
& \phantom{\operatorname{Re}\Bigg(){}}\cdot\exp\left({-i\cdot\left({\frac{\delta\pi}{2}+(1-{\delta}){\psi}}\right)}\right)\Bigg)\\
& -|w-\exp{(i{\psi})}|\\
\geq & \cos{\left({\delta\left({\frac{\pi}{2}-\psi}\right)}\right)}-\sqrt{\mathcal{S}({\psi})}\\
& +\sin{\left({{\psi}+\delta\left({\frac{\pi}{2}-\psi}\right)}\right)}\cdot\sin{\left({\frac{\delta\pi}{2({1-\delta})}}\right)}\text{,}
\end{align*}
which is $>0$ by Lemma \ref{wormpapercrucialestimate}. This shows that $(z,w)\in{H_{0}^{({\delta})}}$, as desired.
\end{proof}

We are now ready to define the Stein neighborhood announced in the beginning of this section. Adopting the notation from Lemmas \ref{wormpaperrdconclos} and \ref{wormpaperrhpconclos}, we {\emph{fix}} an $\eta\in{(0,{\eta_1}({\epsilon}))}$, a $\delta{>0}$ with $\delta{<\min}\{{d_1},{d_2}{({\epsilon},{\eta})}\}$ and a $t\in{(0,{t_{\delta}})}$ for the remainder of this section. With these fixed choices we now define
\begin{align*}
D:={D^{({\delta},{\eta})}}\cap{H_{t}^{({\delta})}}\text{.}
\end{align*}
It is obvious that $D$ is an open subset of $\mathbb{C}^2$. Furthermore, we have
\begin{align*}
\overline{\Omega}\subseteq{D}\subseteq\Omega{({\epsilon})}
\end{align*}
by Lemmas \ref{wormpaperrdconclos} and \ref{wormpaperrhpconclos}. Hence, we only have to show that $D$ is pseudoconvex.

Pseudoconvexity is a local property of the boundary and we have
\begin{align*}
bD\subseteq & \phantom{\cup}\bigg({\left({b{H_{t}^{({\delta})}}}\right)}\cap{\left({b{D^{({\delta},{\eta})}}}\right)}\bigg)\\
& \cup\bigg({\left({b{H_{t}^{({\delta})}}}\right)}\cap{\left({{D^{({\delta},{\eta})}}}\right)}\bigg)\\
& \cup\bigg({\left({{H_{t}^{({\delta})}}}\right)}\cap{\left({b{D^{({\delta},{\eta})}}}\right)}\bigg)\text{.}
\end{align*}
So, since the boundary $bD$ of $D$ is contained in $H_{0}^{({\delta})}\subseteq{(\mathbb{C}\setminus\{0\})}\times\mathbb{C}$, pseudoconvexity of $D$ follows from the following two lemmas, the proofs of which can be found in Section \ref{wormpapertechnlemmasection}.

\theoremstyle{plain}
\newtheorem{wormpaperd1locpscx}[propo]{Lemma}
\begin{wormpaperd1locpscx}
\label{wormpaperd1locpscx}
Let $({z_0},{w_0})\in{b{H_{t}^{({\delta})}}}$ and assume that $({z_0},{w_0})\in{{H_{0}^{({\delta})}}}$. Then there exists an open neighborhood $V$ of $({z_0},{w_0})$ in $\mathbb{C}^2$, such that $V\cap{{H_{t}^{({\delta})}}}$ is pseudoconvex.
\end{wormpaperd1locpscx}

\theoremstyle{plain}
\newtheorem{wormpaperd2locpscx}[propo]{Lemma}
\begin{wormpaperd2locpscx}
\label{wormpaperd2locpscx}
Let $({z_0},{w_0})\in{b{{D^{({\delta},{\eta})}}}}$ and assume that $({z_0},{w_0})\in{{H_{0}^{({\delta})}}}$. Then there exists an open neighborhood $V$ of $({z_0},{w_0})$ in $\mathbb{C}^2$, such that $V\cap{{D^{({\delta},{\eta})}}}$ is pseudoconvex.
\end{wormpaperd2locpscx}

Lemma \ref{wormpaperd1locpscx} deals with the pseudoconvexity of our chosen domain of ``half planes rotating in the $w$-plane'' at certain boundary points. Lemma \ref{wormpaperd2locpscx} says, roughly speaking, that our chosen domain of ``discs rotating in the $w$-plane'' is pseudoconvex at the ``good'' boundary points, which are precisely those contained in $H_{0}^{({\delta})}$.

As mentioned previously, pseudoconvexity of $D$ follows from Lemmas \ref{wormpaperd1locpscx} and \ref{wormpaperd2locpscx}, the proofs of which can be found in Section \ref{wormpapertechnlemmasection}; so we have shown that $D$ is pseudoconvex. Hence $\overline{\Omega}$ has a Stein neighborhood basis. Together with Proposition \ref{wormpapershnotpropos}, this provides a proof for Theorem \ref{oberwurmmaintheorem}.

\section{Remaining Proofs}\label{wormpapertechnlemmasection}

In this section we provide the proofs which remain from Section \ref{wormpapersteinnhbdsection}. We start by proving the crucial estimate, Lemma \ref{wormpapercrucialestimate}.

\begin{proof}[Proof of Lemma \ref{wormpapercrucialestimate}]
Note first that the expression in the claimed inequality is indeed welldefined, since $\mathcal{S}\geq{0}$ on $[-{\beta},{\pi}+{\beta}]$. Owing to the symmetry of $\mathcal{S}$, see \ref{wormpapersissymmy} in Section \ref{wormpaperprelimsection}, we can restrict ourselves to considering the case where $\psi\in{[{-\beta},{\pi}/2]}$. Noting that ${[{-\beta},{\pi}/2]}={[{-\beta},-{\alpha}/2]}\cup{[0,{\pi}/2]}\cup{(-{\alpha}/2,0)}$, we will consider the three intervals on the right hand side separately. Pick some $d_1\in{(0,1/4)}$. By a slight {\emph{abuse of notation}}, we will shrink $d_1$ a finite amount of times over the course of the proof, until it has the desired property.

First consider the interval $[-{\beta},-{\alpha}/2]$. Using Properties \ref{wormpapersissymmy}, \ref{wormpaperprelimprop4} and \ref{wormpaperprelimprop5} in Section \ref{wormpaperprelimsection}), we find a ${\Delta}>0$, such that $\sqrt{\mathcal{S}}<1-\Delta$ on $[-{\beta},-{\alpha}/2]$. By making ${d_1}\in{(0,1)}$ smaller if necessary, we have
\begin{align*}
\left|{\sin{\left({\frac{\delta\pi}{2({1-\delta})}}\right)}}\right|{<\frac{\Delta}{4}}\text{ and }\cos{\left({\delta\left({\frac{\pi}{2}-\psi}\right)}\right)}>1-\frac{\Delta}{4}
\end{align*}
for all $\delta\in{(0,{d_1})}$ and $\psi\in{[-{\beta},-{\alpha}/2]}$. The claimed inequality is then clear in this case.

Next consider the interval $[0,{\pi}/2]$. On this interval we have $\sqrt{\mathcal{S}}\equiv{1}$. By making $d_1\in{(0,1)}$ smaller if necessary, we have the following for all $\delta\in{(0,{d_1})}$:
\begin{align*}
0<\delta\cdot\frac{\pi}{2}<\frac{\delta\pi}{2(1-{\delta})}<\frac{\pi}{2}\text{.}
\end{align*}
We compute, for $\delta\in{(0,d_1)}$ and $\psi\in{[0,{\pi}/2]}$:
\begin{align*}
& \cos{\left({\delta\left({\frac{\pi}{2}-\psi}\right)}\right)}-\sqrt{\mathcal{S}({\psi})}\\
& +\sin{\left({{\psi}+\delta\left({\frac{\pi}{2}-\psi}\right)}\right)}\cdot\sin{\left({\frac{\delta\pi}{2({1-\delta})}}\right)}\\
\geq & \cos\left(\delta\cdot\frac{\pi}{2}\right){-1}+\sin\left(\delta\cdot\frac{\pi}{2}\right)\cdot\sin{\left({\frac{\delta\pi}{2({1-\delta})}}\right)}\\
> & \cos\left(\delta\cdot\frac{\pi}{2}\right){-1}+{\left(\sin\left(\delta\cdot\frac{\pi}{2}\right)\right)}^2\\
= & \cos\left(\delta\cdot\frac{\pi}{2}\right)\cdot\left({1-{\cos\left(\delta\cdot\frac{\pi}{2}\right)}}\right)\text{,}
\end{align*}
which is larger than $0$, as desired.

Finally consider the interval $(-{\alpha}/2,0)$. By shrinking $d_1$ if necessary, we can assume that $\delta\pi{/}(2(1-{\delta}))\in{(0,{\alpha}/2)}$ for all $\delta\in{(0,d_1)}$. For ease of notation, we define a function $M\colon{(0,1)}\times\mathbb{R}\to\mathbb{R}$ by
\begin{align*}
(t,y)\mapsto & \phantom{+}\cos\left(\frac{t\pi}{2(1-t)}\right)\cdot\Big({\cos{(ty)}-\cos{(y)}}\Big)\\
& +\sin\left(\frac{t\pi}{2(1-t)}\right)\cdot\Big({\sin{(ty)}+\sin{((1-t)y)}-\sin{(y)}}\Big)\text{,}
\end{align*}
and a function $\phi\colon{(0,1)}\times\mathbb{R}\to\mathbb{R}$ by
\begin{align*}
(t,x)\mapsto{x+\frac{t\pi}{2(1-t)}}\text{.}
\end{align*}
Using Properties \ref{wormpapersissymmy} and \ref{wormpaperprelimprop7} in Section \ref{wormpaperprelimsection} as well as some elementary trigonometric identities, one readily checks that we have the following for all $\psi\in(-{\alpha}/2,0)$ and $\delta\in{(0,{d_1})}$:
\begin{align*}
g(-{\psi})+M({\delta},{\phi}({\delta},{\psi})) & =\cos{\left({\delta\left({\frac{\pi}{2}-\psi}\right)}\right)}-\sqrt{\mathcal{S}({\psi})}\\
& \phantom{<}+\sin{\left({{\psi}+\delta\left({\frac{\pi}{2}-\psi}\right)}\right)}\cdot\sin{\left({\frac{\delta\pi}{2({1-\delta})}}\right)}\text{.}
\end{align*}
So, since $g>0$ on $\mathbb{R}_{>0}$, it suffices to prove that $M({\delta},{\phi}({\delta},{\psi}))\geq{0}$ for all $\psi\in(-{\alpha}/2,0)$ and $\delta\in{(0,{d_1})}$.

First we consider the case where ${\phi}({\delta},{\psi})\geq{0}$ for some $\psi\in(-{\alpha}/2,0)$ and $\delta\in{(0,{d_1})}$. Since $\delta\pi{/}(2(1-{\delta}))\in{(0,{\pi}/2)}$ and $0<{\delta}<1$ and ${\phi}({\delta},{\psi})\in{[0,{\pi}/2)}$, it suffices to prove the following two inequalities for all $y\in{[0,{\pi}/2]}$ and $t\in{[0,1]}$:
\begin{align*}
{\cos{(ty)}-\cos{(y)}} & \geq{0}\text{,}\\
{\sin{(ty)}+\sin{((1-t)y)}-\sin{(y)}} & \geq{0}\text{.}
\end{align*}
The first inequality is trivial and the second inequality is obvious from the fact that $\sin{(0)}=0$ and $\sin$ is concave on $[0,{\pi}/2]$.

Finally, consider some $\psi\in(-{\alpha}/2,0)$ and $\delta\in{(0,{d_1})}$, for which $\phi{({\delta},{\psi})}{<0}$. For ease of notation, we simply write $\phi$ for $\phi{({\delta},{\psi})}$. Since $-{\alpha}/2<{\psi}<{\phi}<0$ and ${d_1}<1/4$ and by Property \ref{wormpaperprelimprop6} in Section \ref{wormpaperprelimsection}, we have the following estimates for an appropriate $\xi\in{({\phi},{\delta\phi})}$ (coming from the mean value theorem):
\begin{align*}
\cos{({\delta\phi})}-\cos{({\phi})} & =({\delta\phi}-{\phi})\cdot{(-\sin{({\xi})})}\\
& =\sin{(-{\xi})}\cdot{(1-{\delta})}\cdot{(-{\phi})}\\
& \geq\left|\sin{(-\delta{\phi})}\right|\cdot{(1-{\delta})}\cdot{|{\phi}|}\\
& \geq{(3/4)}\cdot{|{-\delta\phi}|}\cdot{(3/4)}\cdot{|{\phi}|}\\
& >{\delta}{|{\phi}|^2}/2\text{,}
\end{align*}
and
\begin{align*}
\sin{({\delta\phi})}+\sin{((1-{\delta}){\phi})}-\sin{({\phi})} & \geq{-\sin{({\phi})}}-|{\delta\phi}|-|(1-{\delta}){\phi}|\\
& =-(\sin{(\phi)}+|{\phi}|)\\
& =-(\sin{({\phi})}-{\phi})\\
& \geq{-}|\sin{({\phi})}-{\phi}|\\
& \geq{-}|{\phi}|^3\text{,}
\end{align*}
and finally, since $\delta\pi{/}(2(1-{\delta}))\in{(0,{\alpha}/2)}\subseteq{(0,{\pi}/2)}$ and ${\alpha}<1/({4\pi})$, we can conclude that
\begin{align*}
M({\delta},{\phi}({\delta},{\psi})) & =\cos{\left({\frac{\delta\pi}{2({1-\delta})}}\right)}\cdot\Bigg(\cos{({\delta\phi})}-\cos{({\phi})}\\
& \phantom{=}+\tan{\left({\frac{\delta\pi}{2({1-\delta})}}\right)}\cdot\Big({\sin{({\delta\phi})}+\sin{((1-{\delta}){\phi})}-\sin{({\phi})}}\Big)\Bigg)\\
& \geq\cos{\left({\frac{\delta\pi}{2({1-\delta})}}\right)}\cdot\Bigg({\frac{1}{2}\delta{|{\phi}|}^2-\tan{\left({\frac{\delta\pi}{2({1-\delta})}}\right)}\cdot{|{\phi}|}^3}\Bigg)\\
& \geq\cos{\left({\frac{\delta\pi}{2({1-\delta})}}\right)}\cdot\Bigg({\frac{1}{2}\delta{|{\phi}|}^2-2\cdot{\left|{\frac{\delta\pi}{2({1-\delta})}}\right|}\cdot{|{\phi}|}^3}\Bigg)\\
& ={\frac{1}{2}\delta{|{\phi}|}^2}\cdot\cos{\left({\frac{\delta\pi}{2({1-\delta})}}\right)}\cdot\Bigg({1-2\cdot{{\frac{\pi}{{1-\delta}}}}\cdot{|{\phi}|}}\Bigg)\\
& \geq{\frac{1}{2}\delta{|{\phi}|}^2}\cdot\cos{\left({\frac{\delta\pi}{2({1-\delta})}}\right)}\cdot\left({1-2\pi\cdot\frac{4}{3}\cdot\frac{1}{4\pi}}\right)\text{,}
\end{align*}
which is clearly $\geq{0}$, as desired.
\end{proof}

It remains to prove Lemmas \ref{wormpaperd1locpscx} and \ref{wormpaperd2locpscx}. Over the course of Section \ref{wormpapersteinnhbdsection} we fixed choices of $\epsilon$, $\delta$, $\eta$ and $t$. We of course work with those choices in the proofs of said lemmas. We start with the proof of Lemma \ref{wormpaperd1locpscx}:

\begin{proof}[Proof of Lemma \ref{wormpaperd1locpscx}]
We define a map $r\colon{({\mathbb{C}\setminus\{0\}})}\times\mathbb{C}\to\mathbb{R}$ by
\begin{align*}
(z,w)\mapsto{t}-\operatorname{Re}\Bigg( & {\left({w+i\cdot\sin{\left(\frac{\delta\pi}{2(1-{\delta})}\right)}}\right)}\\
& \cdot\exp\left({-i\cdot\left({\frac{\delta\pi}{2}+(1-{\delta})\ln{({{|z|}^2})}}\right)}\right)\Bigg)\text{.}
\end{align*}
Since $({z_0},{w_0})\in{H_{0}^{({\delta})}\subseteq{(\mathbb{C}\setminus\{0\})}\times\mathbb{C}}$ and since the real gradient $\nabla{r}$ vanishes nowhere, we get that $r$ is a smooth local defining function for ${{H_{t}^{({\delta})}}}$ in an open neighborhood $U\subseteq{H_{0}^{({\delta})}}$ of $({z_0},{w_0})$. So it suffices to prove that the Levi form of $r$ in direction $(-\partial{r}/\partial{w},\partial{r}/\partial{z})$ is non-negative at every point contained in $U\cap{{b{H_{t}^{({\delta})}}}}$. But if a point $(z,w)$ is contained in said intersection, then said Levi form computes to
\begin{align*}
\frac{(1-{\delta})^2}{4|z|^2}\cdot{(t-r(z,w))}=\frac{(1-{\delta})^2}{4|z|^2}\cdot{t}\text{,}
\end{align*}
which is clearly $>0$ by choice of $t$.
\end{proof}

Finally, we prove Lemma \ref{wormpaperd2locpscx}. Since our chosen domain of ``discs rotating in the $w$-plane'' is of course {\emph{not}} pseudoconvex, the assumption that $({z_0},{w_0})$ is contained in the ``good'' part of the boundary will be crucial when estimating the Levi form. We introduce some notation:

\theoremstyle{definition}
\newtheorem{notationforpscxwormpaper}[propo]{Notation}
\begin{notationforpscxwormpaper}
\label{notationforpscxwormpaper}
With our fixed choice of $\delta$, we set
\begin{align*}
\widetilde{\delta}:=\sin\left(\frac{\delta\pi}{2(1-{\delta})}\right)\text{,}
\end{align*}
and define a map $\gamma\colon\mathbb{C}\setminus\{0\}\to\mathbb{R}$ by
\begin{align*}
z\mapsto\frac{\delta\pi}{2}+(1-{\delta})\ln{({|z|}^2)}\text{.}
\end{align*}
\end{notationforpscxwormpaper}

\begin{proof}[Proof of Lemma \ref{wormpaperd2locpscx}]
Since $({z_0},{w_0})\in{H_{0}^{({\delta})}\subseteq{(\mathbb{C}\setminus\{0\})}\times\mathbb{C}}$ and since furthermore $\nabla{{\rho}_{{\delta},{\eta}}}({z_0},{w_0})\neq{0}$, the function ${\rho}_{{\delta},{\eta}}$ is a smooth local defining function for $D^{({\delta},{\eta})}$ in an open neighborhood $U\subseteq{H_{0}^{({\delta})}}$ of $({z_0},{w_0})$. Given $(\widetilde{z},\widetilde{w})\in{U}$, we denote the Levi form of ${\rho}_{{\delta},{\eta}}$ in direction $(-\partial{{\rho}_{{\delta},{\eta}}}/\partial{w},\partial{{\rho}_{{\delta},{\eta}}}/\partial{z})$ at $(\widetilde{z},\widetilde{w})$ as $L(\widetilde{z},\widetilde{w})$. So it suffices to prove that $L(z,w)$ is non-negative for every point $(z,w)$ contained in $U\cap{b{D^{({\delta},{\eta})}}}$.

To this end, let $(z,w)\in{U}\cap{b{D^{({\delta},{\eta})}}}$. Using that ${\rho}_{{\delta},\eta}(z,w)=0$, one verifies that (see Notation \ref{notationforpscxwormpaper}):
\begin{align*}
\frac{|z|^2\cdot{L(z,w)}}{(1-{\delta})^2} & =\mathcal{S}_{\eta}({\gamma}(z))\cdot\Big({-\mathcal{S}_{\eta}''({\gamma}(z))+2\operatorname{Re}((w+i\widetilde{{\delta}})\cdot\exp{(-i{\gamma}(z))})}\Big)\\
& \phantom{ =}{-}\mathcal{S}_{\eta}'({\gamma}(z))\cdot{2}\operatorname{Re}(i\cdot{(w+i\widetilde{\delta})}\cdot\exp{(-i{\gamma}(z))})\\
& \phantom{ =}+\big({\mathcal{S}_{\eta}'({\gamma}(z))}\big)^2\text{.}
\end{align*}

First, we consider the case where $\mathcal{S}_{\eta}({\gamma}(z))\leq{1}$. Since ${\rho}_{{\delta},\eta}(z,w)=0$, we have $\mathcal{S}_{\eta}({\gamma}(z))\geq{0}$. Furthermore $-\mathcal{S}_{\eta}''$ is $\geq{0}$, since $\mathcal{S}_{\eta}$ is smooth and concave. Combining this with the fact that $a^2-2ab\geq{-b^2}$ for all $a,b\in\mathbb{R}$, we get:
\begin{align*}
\frac{|z|^2\cdot{L(z,w)}}{(1-{\delta})^2} & \geq\mathcal{S}_{\eta}({\gamma}(z))\cdot{2\operatorname{Re}((w+i\widetilde{{\delta}})\cdot\exp{(-i{\gamma}(z))})}\\
& \phantom{ \geq}{-}\Big(\operatorname{Re}(i\cdot{(w+i\widetilde{\delta})}\cdot\exp{(-i{\gamma}(z))})\Big)^2\text{.}
\end{align*}
Using once again that ${\rho}_{{\delta},\eta}(z,w)=0$, we find a $\theta\in\mathbb{R}$ with the property that $(w+i\widetilde{{\delta}})\cdot\exp{(-i{\gamma}(z))}=1+\sqrt{\mathcal{S}_{\eta}({\gamma}(z))}\cdot\exp{(i{\theta})}$. Plugging in and calculating gives
\begin{align*}
\frac{|z|^2\cdot{L(z,w)}}{(1-{\delta})^2}\geq\mathcal{S}_{\eta}({\gamma}(z))\cdot\left({1-\mathcal{S}_{\eta}({\gamma}(z))}+\left({\cos{({\theta})}+\sqrt{\mathcal{S}_{\eta}({\gamma}(z))}}\right)^2\right)
\end{align*}
which is clearly $\geq{0}$, since $0\leq\mathcal{S}_{\eta}({\gamma}(z))\leq{1}$.

Now we consider the case where $\mathcal{S}_{\eta}({\gamma}(z))>{1}$. Using that $(z,w)\in{U}\subseteq{H_{0}^{({\delta})}}$, we get $\operatorname{Re}((w+i\widetilde{{\delta}})\cdot\exp{(-i{\gamma}(z))})>0$. So, since $\mathcal{S}_{\eta}$ is smooth and concave and since $\mathcal{S}_{\eta}({\gamma}(z))>1$, we immediately arrive at the following inequality:
\begin{align*}
\frac{|z|^2\cdot{L(z,w)}}{(1-{\delta})^2} & >-\mathcal{S}_{\eta}''({\gamma}(z)){-}\mathcal{S}_{\eta}'({\gamma}(z))\cdot{2}\operatorname{Re}(i\cdot{(w+i\widetilde{\delta})}\cdot\exp{(-i{\gamma}(z))})\\
& {\geq}-\mathcal{S}_{\eta}''({\gamma}(z)){-}2\left|\mathcal{S}_{\eta}'({\gamma}(z))\right|\cdot{}\left|{w+i\widetilde{\delta}}\right|\\
& {\geq}2\left|\mathcal{S}_{\eta}'({\gamma}(z))\right|\cdot{}\left(50-\left|{w+i\widetilde{\delta}}\right|\right)\text{,}
\end{align*}
where the last inequality follows by combining the properties in \ref{wormpaperessetadef} with the fact that $\mathcal{S}_{\eta}({\gamma}(z))>1$. Hence it suffices to show that $|w+i\widetilde{\delta}|<50$. But, since $\mathcal{S}_{\eta}\leq{1+\eta}<2$, that follows readily from the fact that ${\rho}_{{\delta},{\eta}}(z,w)=0$.
\end{proof}

\bibliographystyle{amsplain}
\bibliography{refspaper4}

\end{document}